\documentclass[a4paper,11pt]{amsart}
\usepackage{amsmath,amssymb,latexsym,amsfonts}
\usepackage[babel=true]{csquotes} % csquotes va utiliser la langue définie dans babel

\newtheorem{theorem}{Theorem}[section]
\newtheorem{lemma}[theorem]{Lemma}
\newtheorem{prop}[theorem]{Proposition}
\newtheorem{corollary}[theorem]{Corollary}

\theoremstyle{definition}		% use "definition-style" font for the rest
\newtheorem{definition}[theorem]{Definition}
\newtheorem{examp}[theorem]{Example}

\newtheorem{rmk}[theorem]{Remark}

\newcommand{\N}{\mathbb{N}}
\newcommand{\R}{\mathbb{R}}

\newcommand{\C}{\mathbb{C}}
\newcommand{\Cc}{\mathcal{C}}
\newcommand{\Hc}{\mathcal{H}}

\newcommand{\f}{\varphi}
\newcommand{\p}{\psi}
\newcommand{\Fc}{\mathcal{F}}

\newcommand{\St}{\widetilde{S}}

\newcommand{\Gt}{\widetilde{\Gamma}}
\newcommand{\Pm}{\mathcal{P}_m}

\newcommand{\pO}{\partial \Omega}

\newcommand{\loc}{L^{\infty}_{loc}}

\newcommand{\Lct}{\mathcal{L}^*}

\usepackage{hyperref}
\usepackage{hyperref}
\usepackage{xcolor}
\usepackage{comment}
\usepackage{dsfont}
\usepackage{yhmath}
\definecolor{violet}{rgb}{0.0,0.2,0.7}
\definecolor{rouge}{cmyk}{0.0,0.6,0.4,0.3}
\definecolor{rouge2}{rgb}{0.8,0.0,0.2}
\hypersetup{
    bookmarks=true,         % show bookmarks bar?
    unicode=false,          % non-Latin characters in Acrobat’s bookmarks
    pdftoolbar=true,        % show Acrobat’s toolbar?
    pdfmenubar=true,        % show Acrobat’s menu?
    pdffitwindow=false,     % window fit to page when opened
    pdfstartview={FitH},    % fits the width of the page to the window
    pdftitle={},    % title
    pdfauthor={},     % author
    colorlinks=true,       % false: boxed links; true: colored links
   linkcolor=blue,          % color of internal links
    citecolor=blue,        % color of links to bibliography
    filecolor=black,      % color of file links
    urlcolor=cyan}           % color of external links

\begin{document}
\title{Viscosity solutions to complex Hessian equations}
\author{Lu Hoang Chinh}
\date{\today}
\maketitle
\begin{abstract}
We study viscosity solutions to complex Hessian equations. 
In the local case, we consider $\Omega$ a bounded domain
 in $\C^n,$ $\beta$  the standard K\"{a}hler form in $\C^n$ 
 and  $1\leq m\leq n.$  Under some suitable conditions on $F, g$, 
 we prove that the equation $(dd^c \f)^m\wedge\beta^{n-m}=F(x,\f)\beta^n,\ \f=g$ 
 on $\pO$ admits  a unique  viscosity solution modulo the existence of subsolution 
 and supersolution. If moreover, the datum is H\"{o}lder continuous then so is the solution.  
 In the global case, let $(X,\omega)$ be a compact Hermitian homogeneous 
 manifold where $\omega$ is an invariant Hermitian metric (not necessarily  K\"{a}hler). 
 We prove that the  equation $(\omega+dd^c\f)^m\wedge\omega^{n-m}=F(x,\f)\omega^n$ 
 has a unique viscosity solution under some natural conditions on $F.$ 
 \end{abstract}
\section{Introduction}
The complex Hessian equation has been studied intensively in recent years.  
In \cite{Li04}, Li solved Dirichlet problems for  complex Hessian equations 
in $m$-pseudoconvex domains with smooth right-hand side and smooth boundary data by using the continuity method. 
In \cite{Bl05}, Blocki considered degenerate complex Hessian equations in $\C^n$  and developed a first step of a potential theory for this equation. Recently, Sadullaev and Abdullaev also studied capacities and polar sets for $m$-subharmonic functions \cite{SA12}.  

Hou \cite{Hou09}, Jbilou \cite{Jb10} and Kokarev \cite{Kok10} began the program of solving  the  non-degenerate complex Hessian equation on compact K\"{a}hler manifolds  four years ago. This is a generalization of the famous Calabi-Yau equation \cite{Y78}. In \cite{Kok10}, this equation is solved under rather restrictive assumptions on the underlined manifold.  Hou \cite{Hou09} and Jbilou \cite{Jb10}  independently solved this equation in the case  the manifold has non-negative holomorphic bisectional curvatures. The curvature assumption served as a technical point in an a priori $\Cc^2$ estimates and people wanted to remove it. Later on, Hou, Ma and Wu \cite{HMW10} provided an important $\Cc^2$ estimate without this hypothesis.  
 Using this estimate and a blowing-up analysis, Dinew and Kolodziej recently 
 solved  the equation in full generality \cite{DK12}. 

Degenerate complex Hessian equations on compact K\"{a}hler manifold   
were considered in \cite{DK11} and \cite{Chi12}.  This approach is global 
in nature since it relies on some difficult integrations by parts. 

The study of real Hessian equations is a classical subject which has been
 developed  previously in many papers, for example
  \cite{CNS85,CW01,ITW04,Kry95,Lab02,Tr95,TW99,Ur01,W09}.

The viscosity method introduced in \cite{Lio83} (see also \cite{CIL92} for a survey)
 is purely local and very efficient to study  weak solutions to nonlinear elliptic partial 
 differential equations.  In \cite{EGZ11} the authors used this method to study degenerate 
 complex Monge-Amp\`ere equation on compact K\"{a}hler manifolds. In the local
 context, using this approach Wang \cite{YW10} considered the Dirichlet
 problem for complex Monge-Amp\`ere  equations where the right-hand side 
 also depends on the solution. 
\medskip

From recent developments on viscosity method applied to complex
 Monge-Amp\`ere equations  it is natural to develop such a treatment 
 for complex Hessian equations. It is the main purpose of this paper, 
 precisely we consider the following complex Hessian equation:
\begin{equation}\label{eq: hessian}
-(dd^c \f)^m\wedge \beta^{n-m}+F(x,\f)\beta^n=0, 
\end{equation}
with boundary value $\f=g$ on $\pO,$ where $1\leq m\leq n$, $\Omega$ 
is a bounded domain in $\C^n,$ $\beta$ is the standard K\"{a}hler form in $\C^n$, 

\begin{equation}\label{eq: V1}
g\in \Cc(\pO),\ \text{and}
\end{equation} 
\begin{eqnarray}\label{eq: V2}
&&F(x,t) \ \text{is a continuous function on}\  \Omega\times \R \rightarrow \R^+\\
&& \text{which is non-decreasing in the second variable}.\nonumber
\end{eqnarray}

We say that $F^{1/m}$ is $\gamma$-H\"{o}lder continuous  uniformly in $t$ if 

\begin{eqnarray}\label{eq: V3}
\sup_{\vert t\vert \leq M} \sup_{x\neq y\in\bar{\Omega}} 
\frac{\vert F^{1/m}(x,t)-F^{1/m}(y,t)\vert}{\vert x-y\vert^{\gamma}}<+\infty, \ \forall M>0.
\end{eqnarray}

Equation (\ref{eq: hessian})  is nonlinear degenerate second order elliptic in the viscosity 
sense (see \cite{CIL92}) when restricted to $m$-subharmonic functions. So, we  can use the concepts of subsolutions and supersolutions.

\medskip

The main results are the following:

\medskip

\noindent {\bf Theorem A.} {\it Let $g, F$ be functions satisfying (\ref{eq: V1}) and (\ref{eq: V2}) 
respectively. Assume that there exist a bounded subsolution $u$ and a bounded supersolution $v$ 
to (\ref{eq: hessian}) such that $u_*=v^*=g$ on $\pO.$ Then there exists a unique viscosity solution
 to (\ref{eq: hessian}) with boundary value $g$.  It is also the unique potential solution.}
\medskip

\noindent {\bf Theorem B.} {\it With the same assumption as in Theorem A, assume moreover that 
$u,v$ are $\gamma$-H\"{o}lder continuous in $\bar{\Omega}$ and $F$ satisfies (\ref{eq: V3}). 
Then the unique solution of (\ref{eq: hessian}) with boundary value $g$ is also $\gamma$-H\"{o}lder 
continuous in $\bar{\Omega}.$}
\medskip

In Theorem A and Theorem B, to solve the equation we need to find a subsolution and a supersolution.  
When the domain $\Omega$ is strictly pseudoconvex these existences are guaranteed. 
\medskip

\noindent {\bf Corollary C.} {\it Assume that $g\in \Cc(\pO)$ and $F$ satisfies (\ref{eq: V2}) and 
 (\ref{eq: V3}). If $\Omega$ is strongly pseudoconvex then (\ref{eq: hessian}) has a unique viscosity 
 solution with boundary value $g.$ If, moreover,  $g$ is $(2\gamma)$-H\"{o}lder continuous and $F$ 
 satisfies (\ref{eq: V3})  with $0<\gamma\leq 1$ then the unique solution is  $\gamma$-H\"{o}lder continuous.}
\medskip

\noindent {\bf Remark.} When $m=n$ we recover the results in \cite{YW10}.
\medskip

We also study viscosity solutions on compact homogeneous Hermitian manifolds. We assume that $X$ 
 is a Hermitian manifold with a Hermitian metric $\omega$ such that the following conditions are verified:

\noindent (H1) $X=G/H$ where $G$ is a connected  Lie group and $H$ is a closed subgroup. 

\noindent (H2)  There exists a compact subgroup $K\subset G$ which acts transitively on $X.$

\noindent (H3) $\omega$  is invariant by $K.$

\noindent{\bf Theorem D.} {\it Assume that $(X,\omega)$ satisfies (H1), (H2) and (H3).  
Let  $F(x,t)$ be a continuous function which is increasing in $t$ and  assume that there 
exist $t_0, t_1\in \R$ such that 
\begin{equation}\label{eq: V4}
F(x,t_0)\leq 1\leq F(x,t_1),\ \forall x\in X.
\end{equation}
Then there exists a unique viscosity solution to 
$$
-(\omega+dd^c\f)^m\wedge \omega^{n-m}+F(x,\f)\omega^n=0.
$$}
\medskip

\noindent{\bf Remark.} Our proof here does not use a priori $\Cc^2$ estimates in 
contrast with  a similar  result in \cite{Chi12} where we use  potential method \cite{Chi12} 
and existence result of \cite{DK12} which relies on $\Cc^2$ estimate of Hou, Ma and Wu \cite{HMW10}.
Moreover, we do not assume that $\omega$ is closed.  An example  of compact Hermitian manifold 
satisfying (H1), (H2), (H3) which is not K\"{a}hler was given to us by  Karl Oeljeklaus to 
 whom we are indebted (Example \ref{example: Karl}).
\medskip

\section{Preliminaries}
In this section, $\Omega$ is a bounded domain and $\beta$ is the 
standard K\"{a}hler form in $\C^n.$ 
We introduce the notion  and basic properties of $m$-subharmonic 
functions in the local context and 
one of $(\omega,m)$-subharmonic functions on compact K\"{a}hler 
manifolds.
\subsection{Elementary symmetric functions}
We begin by a brief review of elementary symmetric functions
 (see \cite{Bl05}, \cite{CW01}, \cite{Ga59}). 
We use the notations in \cite{Bl05}.\\
Let $1\leq k\leq n$ be natural numbers. The elementary symmetric 
function of order $k$ is defined by 
\begin{equation*}
S_k(\lambda)= \sum_{1\leq i_1<i_2<...<i_k\leq
n}\lambda_{i_1}\lambda_{i_2}...\lambda_{i_k},\ \ \lambda=(\lambda_1,...,\lambda_n)\in \R^n.
\end{equation*}
Let $\Gamma_k$ denote the closure of the connected component of $\{
S_k(\lambda)>0\}$ containing $(1,...,1).$ It is easy to show that
$$
\Gamma_k=\big\{\lambda\in \R^n \ / \  S_k(\lambda_1+t,...,\lambda_n+t)\geq 0,  \ \forall t\geq 0\big\}.
$$
and hence
$$
\Gamma_k= \big\{\lambda\in \R^n\ / \ S_j(\lambda)\geq 0,\ \ \forall 1\leq j\leq k\big\}.
$$
We have an obvious inclusion $\Gamma_n\subset ...\subset \Gamma_1.$\\
The set $\Gamma_k$ is a convex cone in $\R^n$ and $S_k^{1/k}$ is concave on $\Gamma_k$ \cite{Ga59}. 

Let $\Hc$ denote the vector space (over $\R$) of complex Hermitian matrices of dimension $n\times n.$ For $A\in \Hc$ we set
$$
\St_k(A)=S_k(\lambda(A)),
$$
where $\lambda(A)\in \R^n$  is the vector of eigenvalues of $A.$ The function $\St_k$ can also be defined as the sum of all principal minors of order $k$,
$$
\St_k(A)=\sum_{\vert I\vert =k}A_{II}.
$$
From the latter we see that $\St_k$ is a homogeneous polynomial of order $k$ on $\Hc$ which is hyperbolic with respect to the identity matrix $I$ (that is for every $A\in
\St$ the equation $\St_k(A+tI)=0$ has $n$ real roots; see \cite{Ga59}). As in
\cite{Ga59} (see also \cite{Bl05}), the cone
$$
\Gt_k:=\big\{A\in \Hc\ / \ \St_k(A+tI)\geq 0, \forall t\geq 0\big\}=\{A\in \Hc\ / \ \lambda(A)\in \Gamma_k\}
$$
is convex and the function $\St_k^{1/k}$ is concave on $\Gt_k.$
%%%%%%%%%%%%%%%%%%%%%%%%%%%%%%%%%%%
\subsection{m-subharmonic functions and the Hessian operator}
We associate real (1,1)-forms $\alpha$ in $\C^n$ with Hermitian matrices $[a_{j\bar{k}}]$ by
$$
\alpha=\frac{i}{\pi}\sum_{j,k}a_{j\bar{k}}dz_j\wedge d\bar{z_k}.
$$
Then the canonical K\"{a}hler form $\beta$ is associated with the identity matrix I. It is easy to see that 	 
\begin{equation*}\label{eq: form to matrix}
\binom{n}{k}\alpha^k\wedge \beta^{n-k}=\St_k(A)\beta^n.
\end{equation*}

\begin{definition}
Let $\alpha$ be a real $(1,1)$-form on $\Omega$. We say that $\alpha$ is $m$-positive at a given point $P\in \Omega$ if at this point we have 
$$
\alpha^j\wedge\beta^{n-j}\geq 0, \ \ \forall j=1,...,m.
$$
$\alpha$ is called $m$-positive if it is $m$-positive at any point of $\Omega.$ If there is no confusion we also denote by $\Gt_m$ the set of $m$-positive (1,1)-forms. \\
Let $T$ be a current of bidegree $(n-k,n-k)$ with $k\leq m$. Then $T$ is called $m$-positive if 
$$
\alpha_1\wedge...\wedge \alpha_k\wedge T\geq 0,
$$
for all $m$-positive $(1,1)$-forms $\alpha_1,...,\alpha_k.$
\end{definition}

 \begin{definition}
A function $u: \Omega\rightarrow \R\cup \{-\infty\}$ is called $m$-subharmonic if it is subharmonic and 
$$
dd^c u\wedge \alpha_1\wedge...\wedge\alpha_{m-1}\wedge\beta^{n-m}\geq 0,
$$
for every $m$-positive (1,1)-forms $\alpha_1,...,\alpha_{m-1}.$
The class of all $m$-subharmonic functions in $\Omega$ will be denoted by $\Pm(\Omega).$
 \end{definition}

We summarize basic properties of $m$-subharmonic functions in the following: 

\begin{prop}\cite{Bl05}\label{prop: basic property of m subharmonic functions}
(i) If $u$ is $\mathcal{C}^2$ smooth then $u$ is $m$-subharmonic if and only if the form $dd^cu$ is $m$-positive at every point in $\Omega.$

(ii) If $u,v \in \Pm(\Omega)$ then $\lambda u+\mu v\in \Pm(\Omega), \forall \lambda, \mu >0.$

(iii) If $u$ is $m$-subharmonic in $\Omega$ then the standard regularization $u\star \chi_{\epsilon}$ is also $m$-subharmonic in $\Omega_{\epsilon}:=\{x\in \Omega: d(x,\partial \Omega)>\epsilon\}$.

(iv) If $(u_l)\subset \Pm(\Omega)$ is  locally uniformly bounded from above then $(\sup u_l)^{\star}\in \Pm(\Omega)$, where $v^{\star}$ is the upper semicontinuous regularization of $v$. 

(v) $PSH=\mathcal{P}_n\subset ...\subset \mathcal{P}_1=SH.$

(vi)  Let $\emptyset \neq U\subset \Omega$ be a proper open subset such that $\partial U\cap \Omega$ is relatively compact in $\Omega$. If $u\in \mathcal{P}_m(\Omega)$, $v\in \Pm(U)$ and $\limsup_{x\to y}v(x)\leq u(y)$ for each $y\in \partial U\cap \Omega$ then the function $w$, defined by
$$
 w= \left\{ {\begin{array}{*{20}c} {u\ \textrm{on}\ \Omega \setminus U}\\{\max (u,v) \ \textrm{on} \ U }\end{array}}\right., 
 $$
is $m$-subharmonic in $\Omega.$
\end{prop}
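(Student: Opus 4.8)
Since this proposition is quoted from \cite{Bl05}, the plan is to recall the skeleton of its proof; the only step that needs genuine work is (i), from which (ii)--(vi) will follow by the routine devices of pluripotential theory. For (i) I would first translate the analytic definition into linear algebra: polarizing $\binom{n}{k}\alpha^k\wedge\beta^{n-k}=\St_k(A)\beta^n$ in the form $\alpha$ gives, for $u\in\mathcal{C}^2$ and real $(1,1)$-forms $\alpha_1,\dots,\alpha_{m-1}$ with matrices $A_1,\dots,A_{m-1}$,
\[
dd^cu\wedge\alpha_1\wedge\dots\wedge\alpha_{m-1}\wedge\beta^{n-m}=\binom{n}{m}^{-1}\St_m\big(\mathrm{Hess}(u),A_1,\dots,A_{m-1}\big)\,\beta^n ,
\]
where $\St_m$ on the right is the symmetric $m$-linear form polarizing $A\mapsto\St_m(A)$. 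Taking all $A_i=I$ yields a positive multiple of $\Delta u$, so subharmonicity of $u$ is the instance $\alpha_i=\beta$ of the wedge inequality, and $m$-subharmonicity of $u$ becomes the pointwise requirement $\St_m(\mathrm{Hess}(u)(P),A_1,\dots,A_{m-1})\ge 0$ for all $A_i\in\Gt_m$. The essential input --- and the step I expect to be the main obstacle --- is then G\r{a}rding's self-duality of $\Gt_m$ for this mixed form: $A\in\Gt_m$ if and only if $\St_m(A,B_1,\dots,B_{m-1})\ge 0$ for all $B_i\in\Gt_m$ (with $\St_m$ strictly positive when all arguments lie in the interior of $\Gt_m$); this is the nontrivial half of the theory of \cite{Ga59}. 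Granting it, the pointwise condition is exactly $\mathrm{Hess}(u)(P)\in\Gt_m$ for every $P$, i.e. $m$-positivity of $dd^cu$; the passage between the distributional inequality of the definition and the pointwise one is harmless because for $u\in\mathcal{C}^2$ the left-hand coefficient is continuous.

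With (i) available the remaining items are bookkeeping. Item (ii) follows from the linearity of $dd^c$ and of the wedge product, $dd^c(\lambda u+\mu v)\wedge\alpha_1\wedge\dots\wedge\alpha_{m-1}\wedge\beta^{n-m}=\lambda(dd^cu\wedge\cdots)+\mu(dd^cv\wedge\cdots)\ge 0$, together with the corresponding fact for subharmonic functions. For (iii) I would write $u\star\chi_\epsilon(x)=\int u(x-z)\chi_\epsilon(z)\,dz$ and use that $\mathcal{P}_m$ is translation invariant (because $\beta$ is) and stable under convolution with a nonnegative mollifier, exactly as in the plurisubharmonic case; the regularization is moreover smooth. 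For (iv) I would apply Choquet's lemma to pass to a countable subfamily with the same upper semicontinuous envelope, then to the increasing sequence of its finite maxima --- which lie in $\Pm(\Omega)$ since $\Pm$ is stable under finite maxima (reduce to $\mathcal{C}^2$ by (iii) and smooth out the maximum) --- and invoke that the upper semicontinuous envelope of a locally bounded increasing sequence in $\Pm(\Omega)$ lies in $\Pm(\Omega)$. For (v) I would check the endpoints: $\Gamma_1=\{\lambda:\sum_i\lambda_i\ge 0\}$ gives $\mathcal{P}_1=SH$, and $\Gamma_n$ (the positive orthant) gives $\mathcal{P}_n=PSH$ --- using that a $(1,1)$-current $T$ is positive iff $T\wedge\gamma_1\wedge\dots\wedge\gamma_{n-1}\ge 0$ for all positive $(1,1)$-forms $\gamma_i$ --- and then combine the chain $\Gamma_n\subset\cdots\subset\Gamma_1$ with (i) and the approximation in (iii) to get $\mathcal{P}_n\subset\cdots\subset\mathcal{P}_1$.

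For the pasting lemma (vi) I would first verify that $w$ is upper semicontinuous: this is immediate on the open set $(\Omega\setminus\overline{U})\cup U$, and at $y\in\partial U\cap\Omega$ one has $\limsup_{x\to y}w(x)\le\max\big(u(y),\,\limsup_{x\to y}v(x)\big)\le u(y)=w(y)$ by upper semicontinuity of $u$ and the hypothesis on $v$. Since $m$-subharmonicity is a local property and $w$ coincides near every point of $(\Omega\setminus\overline{U})\cup U$ with an $m$-subharmonic function, the only point to settle is a neighborhood of $\partial U\cap\Omega$, where I would argue as in the plurisubharmonic case --- comparing $w$ on small balls with solutions of the homogeneous Dirichlet problem and using the comparison-principle characterization of $\Pm$ --- to conclude $w\in\Pm(\Omega)$; see \cite{Bl05} for the details. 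Thus the real content sits in (i), specifically in the self-duality of $\Gt_m$, and once that is in place the proposition is essentially a transcription of the classical plurisubharmonic theory.
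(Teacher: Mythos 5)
The paper does not supply a proof of this proposition: it is quoted directly from Blocki \cite{Bl05}, so there is no in-paper argument to compare against. Your reconstruction is correct and follows exactly the route taken in \cite{Bl05} --- reduce (i) to the pointwise inclusion $\mathrm{Hess}(u)(P)\in\Gt_m$ via the polarization identity and G{\aa}rding's characterization of $\Gt_m$ through nonnegativity of the mixed form $\St_m(A,B_1,\dots,B_{m-1})$ on $\Gt_m^{m-1}$, and then derive (ii)--(vi) by the standard devices (linearity, mollification, Choquet plus monotone limits, comparison of the cones $\Gamma_k$, and the gluing lemma), so there is nothing substantively different to report.

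One small refinement worth noting: for the maximum stability used in (iv) and the gluing in (vi), the cleanest route --- and the one Blocki actually uses --- is to observe that $u\in\Pm(\Omega)$ if and only if $u$ is subharmonic for every constant-coefficient linear elliptic operator $L_\alpha w := dd^c w\wedge\alpha_1\wedge\dots\wedge\alpha_{m-1}\wedge\beta^{n-m}$ with $\alpha_j$ strictly $m$-positive constant forms (this is exactly how the paper itself argues in Lemma \ref{lem: viscosity sense and usual sense}); then $\max(u,v)$ and the glued function $w$ inherit $L_\alpha$-subharmonicity from the classical linear theory for each $\alpha$ separately, avoiding the somewhat delicate appeal to the regularized maximum of non-smooth functions and to Dirichlet-problem comparison that you invoke.
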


For locally bounded $m$-subharmonic functions $u_1,...,u_p$ ($p\leq m$) and a closed $m$-positive current $T$ we can inductively define a closed $m$-positive current $dd^c u_1\wedge...\wedge dd^c u_p\wedge T$ (following Bedford and Taylor \cite{BT76}).
\begin{lemma}\label{lem: symmetric}
Let $u_1,...,u_k (k\leq m)$ be locally bounded $m$-subharmonic functions in $\Omega$ and let $T$ be a closed $m$-positive current of bidegree $(n-p,n-p)$ ($p\geq k$). Then we can define inductively a closed $m$-positive current 
$$
dd^cu_1\wedge dd^cu_2\wedge ...\wedge dd^cu_k\wedge T,
$$
and the product is symmetric, i.e. 
$$
dd^cu_1\wedge dd^cu_2\wedge ...\wedge dd^cu_p\wedge T=
dd^cu_{\sigma(1)}\wedge dd^cu_{\sigma(2)}\wedge ...\wedge dd^cu_{\sigma(k)}\wedge T,
$$
for every permutation $\sigma: \{1,...,k\}\to \{1,...,k\}.$

In particular, the Hessian measure of $u\in \Pm(\Omega)\cap\loc$ is well defined as
$$
H_m(u)=(dd^cu)^m\wedge\beta^{n-m}.
$$
\end{lemma}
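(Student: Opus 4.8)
The plan is to follow the Bedford--Taylor construction \cite{BT76} (see also \cite{Bl05}), systematically replacing the cone of positive $(1,1)$-forms by the cone $\Gt_m$ of $m$-positive ones. Three elementary facts drive the adaptation. First, since positive $(1,1)$-forms are $m$-positive (because $\mathcal{P}_n\subset\mathcal{P}_m$, equivalently $\Gamma_n\subset\Gamma_m$), an $m$-positive current is in particular weakly positive, hence has measure coefficients and locally finite mass. Second, for $u\in\Pm(\Omega)$ the regularizations $u\star\chi_\epsilon$ are $m$-subharmonic by Proposition~\ref{prop: basic property of m subharmonic functions}(iii) and, being mollifications of a subharmonic function by a radial kernel, decrease to $u$ as $\epsilon\downarrow0$. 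Third, if $S$ is an $m$-positive current and $\gamma$ a smooth $m$-positive $(1,1)$-form of complementary bidegree, then $\gamma\wedge S$ is again $m$-positive, directly from the definition: $\alpha_1\wedge\cdots\wedge\alpha_r\wedge(\gamma\wedge S)\geq0$ is just the $m$-positivity of $S$ tested against the $m$-positive forms $\alpha_1,\dots,\alpha_r,\gamma$.

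I would then argue by induction on $k$. Assuming $S:=dd^cu_2\wedge\cdots\wedge dd^cu_k\wedge T$ has been built as a closed $m$-positive current of bidegree $(n-p+k-1,n-p+k-1)$, the current $u_1S$ is well defined because $u_1\in\loc$ and $S$ has measure coefficients, and one sets
\[
dd^cu_1\wedge dd^cu_2\wedge\cdots\wedge dd^cu_k\wedge T:=dd^c\bigl(u_1S\bigr).
\]
It is closed since $d\circ dd^c=0$ on currents. For $m$-positivity, take smooth $m$-subharmonic functions $u_1^{(j)}=u_1\star\chi_{1/j}\searrow u_1$: then, $S$ being closed, $dd^c(u_1^{(j)}S)=dd^cu_1^{(j)}\wedge S$, which is $m$-positive by the third fact above together with Proposition~\ref{prop: basic property of m subharmonic functions}(i), while $u_1^{(j)}S\weak u_1S$ by dominated convergence, so $dd^c(u_1^{(j)}S)\weak dd^c(u_1S)$ and the limit, being a weak limit of $m$-positive currents, is $m$-positive. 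Taking $u_1=\cdots=u_m=u$ and $T=\beta^{n-m}$ then yields that $H_m(u)=(dd^cu)^m\wedge\beta^{n-m}$ is a well-defined positive measure.

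The symmetry statement is the part requiring real work and, exactly as in \cite{BT76}, it reduces to a monotone continuity property: if $u_l^{(j)}\searrow u_l$ are smooth $m$-subharmonic for $1\leq l\leq k$, then the currents built inductively from the $u_l^{(j)}$ converge weakly, independently of the order in which the products are formed, to $dd^cu_1\wedge\cdots\wedge dd^cu_k\wedge T$. Granting this, symmetry is immediate, because for smooth functions the product is the ordinary, manifestly symmetric, wedge of forms, and the two orders of forming the product are recovered as the same weak limit. I would prove the continuity property by induction on $k$, the main tools being Chern--Levine--Nirenberg-type mass estimates
\[
\bigl\|dd^cu_1\wedge\cdots\wedge dd^cu_k\wedge T\bigr\|_K\leq C_{K,K'}\,\|u_1\|_{L^{\infty}(K')}\cdots\|u_{k-1}\|_{L^{\infty}(K')}\,\|T\|_{K'},
\]
valid because every intermediate current is $m$-positive (so wedging with powers of $\beta$ controls its mass), together with the elementary fact that $\limsup_j\langle S_j,v_j\phi\rangle\leq\langle S,v\phi\rangle$ whenever $\phi\geq0$, $S_j\geq0$, $S_j\weak S$ and $v_j\searrow v$ with $v_j$ continuous, plus the matching lower bound.

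The step I expect to be the main obstacle is precisely this monotone convergence theorem: one must show that the inductively constructed current is independent of the approximating sequences and behaves continuously under decreasing limits, which needs the full Chern--Levine--Nirenberg machinery and a careful $\limsup/\liminf$ bookkeeping through the inductive step. All of this is the $\Gt_m$-analogue of standard pluripotential theory, and no genuinely new phenomenon arises from working with $1\leq m<n$ rather than $m=n$.
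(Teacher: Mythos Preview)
The paper does not actually prove this lemma: it is stated without proof, preceded only by the parenthetical remark that the inductive definition follows Bedford and Taylor \cite{BT76}, and the result is implicitly attributed to \cite{Bl05}. Your sketch is precisely the standard Bedford--Taylor argument transported to the cone $\Gt_m$, which is what \cite{Bl05} carries out and what the paper is deferring to, so your approach is the intended one.

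One small slip: in your Chern--Levine--Nirenberg estimate the right-hand side should carry all $k$ factors $\|u_1\|_{L^{\infty}(K')}\cdots\|u_k\|_{L^{\infty}(K')}$, not $k-1$; the inductive integration-by-parts step peels off one $dd^cu_l$ at a time and each contributes an $L^\infty$ norm. This does not affect the logic of your argument.
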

\subsection{$(\omega,m)$-subharmonic functions}
In this section,  $(X,\omega)$ is a compact K\"{a}hler manifold and  $U\subset X$  is an open subset  contained in a local chart. 
\begin{definition}\label{def: weakly subharmonic}
A function $u\in L^1(U)$ is called weakly $\omega$-subharmonic if 
$$
dd^cu\wedge \omega^{n-1}\geq 0,
$$
in the weak sense of currents.
\end{definition}
\noindent Thanks to Littman \cite{Lit63} we have the following approximation properties.
\begin{prop}\label{prop: Littman}
Let $u$ be a weakly $\omega$-subharmonic function in $U$. Then there exists a one parameter family of functions $u_h$ with the following properties: For every  compact subset $U'\subset U$

a)  $u_h$ is smooth in $U'$ for $h$ sufficiently large,

b)  $dd^cu_h\wedge \omega^{n-1}\geq 0$ in $U',$

c) $u_h$ is non-increasing with increasing $h,$ and $\lim_{h\to \infty}u_h(x)=u(x)$ almost everywhere  in $U',$

d) $u_h$ is given explicitly as $u_h(y)=\int_{U} K_h(x,y)u(x)dx,$ where $K_h$ is a smooth non-negative function and $\int_{U} K_h(x,y)dy \to 1,$ uniformly in $x\in U'.$ 
\end{prop}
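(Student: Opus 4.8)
The plan is to read the inequality $dd^c u\wedge\omega^{n-1}\ge 0$ as the subsolution condition for a single uniformly elliptic linear operator with smooth coefficients, and then to quote Littman's monotone smoothing theorem \cite{Lit63}.

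First I would fix local holomorphic coordinates $z=(z_1,\dots,z_n)$ on a neighbourhood of $\bar{U'}$ and write $\omega=\frac{i}{\pi}\sum_{j,k}g_{j\bar k}\,dz_j\wedge d\bar z_k$ with $(g_{j\bar k})$ a smooth positive-definite Hermitian matrix; let $(g^{j\bar k})$ be its inverse. A direct computation with wedge products shows that for every smooth $v$
$$
dd^c v\wedge\omega^{n-1}=c_n\Big(\sum_{j,k}g^{j\bar k}\,\partial_j\partial_{\bar k}v\Big)\,\omega^n ,
$$
with a constant $c_n>0$ depending only on $n$ and the normalisation of $d^c$, and this identity continues to hold in the sense of currents for any $v\in L^1_{loc}$. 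Hence $u$ is weakly $\omega$-subharmonic in $U$ if and only if $Lu\ge 0$ in the distributional sense, where $L=\sum_{j,k}g^{j\bar k}\,\partial_j\partial_{\bar k}$. Separating real and imaginary parts of the $z_j$, $L$ is a second order linear operator on an open subset of $\R^{2n}$ with no lower order terms; since $(g^{j\bar k})$ is smooth and positive-definite, $L$ has smooth coefficients and is uniformly elliptic on a neighbourhood of the compact set $\bar{U'}$.

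Next I would apply Littman's theorem on generalized subharmonic functions \cite{Lit63}: for an operator $L$ of exactly this type, every $L^1$ distributional subsolution $u$ admits a one-parameter family of mollifications $u_h(y)=\int_U K_h(x,y)\,u(x)\,dx$, obtained from a canonical non-negative kernel $K_h$ built from the fundamental solution of $L$, with the properties that $u_h$ is $C^\infty$ on $U'$ for $h$ large, $Lu_h\ge 0$ on $U'$, $h\mapsto u_h$ is non-increasing with $u_h\to u$ almost everywhere on $U'$, and $\int_U K_h(x,y)\,dy\to 1$ uniformly for $x$ in a compact subset; smoothness of $u_h$ on $U'$ also follows a posteriori from interior elliptic regularity together with the smoothness of $K_h$. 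Substituting $Lu_h\ge 0$ back into the displayed identity gives $dd^c u_h\wedge\omega^{n-1}\ge 0$ on $U'$, and (a)--(d) follow.

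The only genuine point to verify is that our $L$ meets the hypotheses under which Littman's construction works: uniform ellipticity with smooth principal coefficients and the absence of first- and zeroth-order terms (this is what allows constants to be solutions and makes the sub-mean-value/monotonicity mechanism behind $K_h$ operate). Both are immediate from the positive-definiteness and smoothness of $\omega$. The actual construction of $K_h$ and the proofs of monotonicity and of the kernel estimates are precisely the content of \cite{Lit63} and need not be reproduced; one only records that, since the fundamental solution and hence $K_h$ depend on the coordinate patch and on the chosen compact $\bar{U'}$, the statement is intrinsically local, which is why the conclusions are asserted for every compact $U'\subset U$. I expect no serious obstacle beyond this bookkeeping.
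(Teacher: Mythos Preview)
Your proposal is correct and matches the paper's approach: the paper gives no proof at all, merely prefacing the proposition with ``Thanks to Littman \cite{Lit63}'' and citing the result as a black box. You have simply made explicit the reduction the paper leaves implicit---namely that $dd^c u\wedge\omega^{n-1}\ge0$ is equivalent to $Lu\ge0$ for the uniformly elliptic operator $L=\sum g^{j\bar k}\partial_j\partial_{\bar k}$ with smooth coefficients, so that Littman's monotone smoothing applies directly.
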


\begin{definition}
A function $u$ is called $\omega$-subharmonic if it is weakly $\omega$-subharmonic  and for every $U'\Subset U$,  $\lim_{h\to\infty}u_h(x)=u(x), \forall x\in U',$ where $u_h$ is constructed as in Proposition \ref{prop: Littman}. 
\end{definition}
\begin{rmk}
Any continuous weakly $\omega$-subharmonic function is $\omega$-subharmonic. 

If $(u_j)$ is a sequence of continuous $\omega$-subharmonic functions decreasing to $u\not \equiv -\infty$ then $u$ is $\omega$-subharmonic. 

If $u$ is weakly $\omega$-subharmonic then the pointwise limit of $(u_h)$ is an $\omega$-subharmonic function.

Let $(u_j)$ be a sequence of $\omega$-subharmonic functions and $(u_j)$ is uniformly bounded from above. Then $u:=(\limsup_j u_j)^{\star}$ is $\omega$-subharmonic, where for a function $v$, $v^{\star}$ denotes the upper semicontinuous regularization of $v.$
\end{rmk}

\begin{definition}
Let $\alpha$ be a real $(1,1)$-form on $X$. We say that $\alpha$ is
$(\omega,m)$-positive at a given point $P\in X$ if at this point we have
$$
\alpha^k\wedge\omega^{n-k}\geq 0, \ \ \forall k=1,...,m.
$$
We say that $\alpha$ is $(\omega,m)$-positive if it is $(\omega,m)$-positive at
any point of $X.$
\end{definition}
\begin{rmk}
Locally at $P\in X$ with local coordinates $z_1,...,z_n$, we have
$$
\alpha=\frac{i}{\pi}\sum_{j,k}\alpha_{j\bar{k}}dz_j\wedge d\bar{z_k},
$$
and 
$$
\omega=\frac{i}{\pi}\sum_{j,k}g_{j\bar{k}}dz_j\wedge d \bar{z_k}.
$$
Then $\alpha$ is $(\omega,m)$-positive at $P$ if and only if the vector of eigenvalues
$\lambda(g^{-1}\alpha)=(\lambda_1,...,\lambda_n)$ of the matrix
$\alpha_{j\bar{k}}(P)$ with respect to the matrix $g_{j\bar{k}}(P)$ is in
$\Gamma_m$. These eigenvalues are independent of any choice of local
coordinates. 
\end{rmk}
\noindent Following Blocki \cite{Bl05} we can define $(\omega,m)$-subharmonicity for  (non-smooth) functions.

\begin{definition}\label{def: m sub non smooth}
A function $\varphi: X\rightarrow \R \cup \{-\infty\}$ is called $(\omega,m)$-subharmonic if the following conditions hold:

(i)  in any local chart $\Omega,$ given $\rho$ a local potential of $\omega$ and set $u:=\rho+\varphi$, then $u$ is $\omega$-subharmonic,  

(ii) for every smooth $(\omega,m)$-positive forms
$\beta_1,...,\beta_{m-1}$ we have, in the weak sense of distributions,
$$
(\omega+dd^c\varphi)\wedge \beta_1\wedge...\wedge \beta_{m-1}\wedge
\omega^{n-m}\geq 0.
$$
\end{definition}

\noindent Let  $SH_m(X,\omega)$ be the set of all $(\omega,m)$-subharmonic functions on $X.$ Observe  that, by definition, any $\varphi\in SH_m(X,\omega)$ is upper semicontinuous. 

\noindent The following properties of $(\omega,m)$-subharmonic functions are easy to show.

\begin{prop}\label{prop: properties of m sub functions}
(i)  If $\varphi\in \Cc^2(X)$ then $\varphi$ is $(\omega,m)$-subharmonic if the form $(\omega+dd^c\varphi)$ is $(\omega,m)$-positive, or equivalently 
$$
(\omega+dd^c\f)\wedge(\omega+dd^cu_1)\wedge...\wedge(\omega+dd^c u_{m-1})\wedge\omega^{n-m}\geq 0,
$$
for all $\Cc^2$ $(\omega,m)$-subharmonic functions $u_1,...,u_{m-1}.$

(ii) If $\varphi,\psi\in SH_m(X,\omega)$ then $\max (\varphi,\psi)\in SH_m(X,\omega).$

(iii) If $\varphi,\psi\in SH_m(X,\omega)$ and $\lambda\in [0,1]$ then $\lambda \varphi+(1-\lambda)\psi\in SH_m(X,\omega).$

(iv) If $(\varphi_j)\subset SH_m(X,\omega)$ is uniformly bounded from above then  
$$
(\limsup_{j}\varphi_j)^{\star}\in SH_m(X,\omega).
$$
\end{prop}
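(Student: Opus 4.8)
The plan is to verify, for each of the four assertions, the two requirements of Definition~\ref{def: m sub non smooth}: that a local potential $\rho+\varphi$ be $\omega$-subharmonic, and that
$(\omega+dd^c\varphi)\wedge\beta_1\wedge\cdots\wedge\beta_{m-1}\wedge\omega^{n-m}\ge 0$
in the sense of currents for every smooth $(\omega,m)$-positive $\beta_1,\dots,\beta_{m-1}$. Assertion (iii) is then immediate from linearity: writing $\omega=\lambda\omega+(1-\lambda)\omega$ and using $dd^c(\lambda\varphi+(1-\lambda)\psi)=\lambda\,dd^c\varphi+(1-\lambda)\,dd^c\psi$, the tested $(n,n)$-form for $\lambda\varphi+(1-\lambda)\psi$ is the convex combination of those for $\varphi$ and $\psi$, hence nonnegative, and $\lambda(\rho+\varphi)+(1-\lambda)(\rho+\psi)$ is $\omega$-subharmonic because the regularization in Proposition~\ref{prop: Littman} is linear in the function.

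For (i) the content is G\aa rding's theory of hyperbolic polynomials, parallel to the local Proposition~\ref{prop: basic property of m subharmonic functions}(i). At a point $P$, after choosing an $\omega(P)$-orthonormal coframe, smooth real $(1,1)$-forms correspond to Hermitian matrices and $(\omega,m)$-positivity at $P$ means the matrix lies in $\Gt_m$. By the pointwise identity $\binom nm\alpha^m\wedge\omega^{n-m}=\St_m(A)\,\omega^n$ in such a frame (with $A$ the matrix of $\alpha$ relative to $\omega(P)$) and its polarization, the product $(\omega+dd^c\varphi)\wedge(\omega+dd^cu_1)\wedge\cdots\wedge(\omega+dd^cu_{m-1})\wedge\omega^{n-m}$ equals, up to a positive constant, the fully polarized $\St_m$ at the associated matrices times the volume form. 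Hence if $\omega+dd^c\varphi$ is $(\omega,m)$-positive, then since each $\omega+dd^cu_i$ is too (when $u_i$ is $\Cc^2$ and $(\omega,m)$-subharmonic), G\aa rding's inequality yields the wedge inequality. Conversely, since $\Gt_m$ coincides with the set of matrices paired nonnegatively by the polarized $\St_m$ with all of $\Gt_m$ \cite{Ga59}, if $\omega+dd^c\varphi$ fails to be $(\omega,m)$-positive at some $P$ there are matrices of $\Gt_m$ against which $\St_m$ is negative; realizing these near $P$ as $\omega+dd^cu_i$ with $u_i$ a quadratic model times a cut-off, as in the local case, makes the wedge product strictly negative on a neighborhood of $P$, a contradiction. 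Finally $(\omega,m)$-positivity of $\omega+dd^c\varphi$ forces $(\omega+dd^c\varphi)\wedge\omega^{n-1}\ge 0$, i.e. $\rho+\varphi$ is $\omega$-subharmonic, so the first requirement is automatic, and the three conditions in (i) are equivalent.

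For (ii), the $\omega$-subharmonicity of $\max(\rho+\varphi,\rho+\psi)$ is the stability of subsolutions of the uniformly elliptic operator $u\mapsto dd^cu\wedge\omega^{n-1}$ under finite maxima, which comes from the potential theory underlying Proposition~\ref{prop: Littman}. For the current inequality one argues as in Bedford--Taylor \cite{BT76}: pick a smooth, convex, coordinatewise non-decreasing $M_\epsilon\colon\R^2\to\R$ with $\max\le M_\epsilon\le\max+\epsilon$ and $\partial_1M_\epsilon+\partial_2M_\epsilon\le 1$ (for instance $M_\epsilon(s,t)=\epsilon\log(e^{s/\epsilon}+e^{t/\epsilon})$), so that $M_\epsilon(\varphi,\psi)\searrow\max(\varphi,\psi)$. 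Since the Hessian of $M_\epsilon$ is positive semidefinite,
$$
\omega+dd^cM_\epsilon(\varphi,\psi)\ \ge\ (\partial_1M_\epsilon)\,(\omega+dd^c\varphi)+(\partial_2M_\epsilon)\,(\omega+dd^c\psi)+(1-\partial_1M_\epsilon-\partial_2M_\epsilon)\,\omega
$$
as currents, where $\partial_iM_\epsilon$ is evaluated at $(\varphi,\psi)$; wedging with $\beta_1\wedge\cdots\wedge\beta_{m-1}\wedge\omega^{n-m}$ and testing against a nonnegative function exhibits the left side as a sum of nonnegative quantities, using the inequality already known for $\varphi$ and $\psi$ and the $(\omega,m)$-positivity of $\omega$; letting $\epsilon\to 0$ gives the inequality for $\max(\varphi,\psi)$.

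Assertion (iv) then follows by the usual Hartogs-type argument. By (ii) and induction, $\max(\varphi_N,\dots,\varphi_{N+p})\in SH_m(X,\omega)$; its increasing limit $\sup_{j\ge N}\varphi_j$ has an upper semicontinuous regularization $w_N$ that is $(\omega,m)$-subharmonic, because increasing limits of $(\omega,m)$-subharmonic functions (uniformly bounded above) keep both requirements of Definition~\ref{def: m sub non smooth} --- $dd^c$ is continuous along monotone $L^1_{\mathrm{loc}}$-convergent sequences --- and the upper regularization changes the function only on a Lebesgue-null set. The $w_N$ decrease to a function $w\in SH_m(X,\omega)$ (decreasing limits preserve both requirements, by Proposition~\ref{prop: Littman} and the remarks following it) with $w\ge\limsup_j\varphi_j$ everywhere and $w=\limsup_j\varphi_j$ almost everywhere; the sub-mean-value property of $\omega$-subharmonic functions then forces $w=(\limsup_j\varphi_j)^{\star}$. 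The main obstacle throughout (ii) and (iv) is that $\omega$ is not constant, so Euclidean mollification does not preserve $(\omega,m)$-positivity: one must take the monotone-limit and upper-regularization facts for the $\omega$-subharmonic component from Littman's metric-adapted potential theory, treat the $(\omega,m)$-positivity component purely through the convexity/G\aa rding structure, and --- the most delicate point --- make the Bedford--Taylor estimate above rigorous for merely upper semicontinuous $\varphi,\psi$ rather than smooth ones.
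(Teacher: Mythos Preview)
The paper does not prove this proposition; it merely states that these properties ``are easy to show.'' Your arguments for (i) and (iii) are correct and standard, and your reduction of (iv) to (ii) via finite maxima, increasing limits, upper regularization, and a decreasing limit is the right outline.

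For (ii), however, the $M_\epsilon$ argument as written has a genuine gap, which you yourself flag at the end. The chain-rule inequality
\[
\omega+dd^cM_\epsilon(\varphi,\psi)\ \ge\ (\partial_1M_\epsilon)(\omega+dd^c\varphi)+(\partial_2M_\epsilon)(\omega+dd^c\psi)+(1-\partial_1M_\epsilon-\partial_2M_\epsilon)\,\omega
\]
is a pointwise inequality of $(1,1)$-forms that requires $\varphi,\psi\in\Cc^2$; for merely upper semicontinuous $\varphi,\psi$ the right-hand side is a product of a bounded measurable function with a current, and the inequality does not follow from anything you have established. Acknowledging that this is ``the most delicate point'' without resolving it leaves (ii), and therefore (iv), unproved.

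A cleaner route avoids $M_\epsilon$ entirely and exploits the \emph{linearity} of condition~(ii) in Definition~\ref{def: m sub non smooth}. Fix smooth $(\omega,m)$-positive $\beta_1,\dots,\beta_{m-1}$ and put $T=\beta_1\wedge\cdots\wedge\beta_{m-1}\wedge\omega^{n-m}$. Since every positive $(1,1)$-form lies in $\Gt_m$, G\aa rding's inequality shows that $T$ is a weakly positive $(n-1,n-1)$-form; hence $T_\delta:=T+\delta\,\omega^{n-1}$ is strictly positive for $\delta>0$, and $L_\delta u:=dd^cu\wedge T_\delta$ is a linear uniformly elliptic second-order operator with smooth coefficients. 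The two conditions of Definition~\ref{def: m sub non smooth} combine to give $(\omega+dd^c\varphi)\wedge T_\delta\ge0$, i.e.\ $\rho+\varphi$ is $L_\delta$-subharmonic in each chart, and likewise for $\psi$. Stability of $L_\delta$-subharmonic functions under maximum is then classical potential theory (via the comparison characterization with $L_\delta$-harmonic functions, as in \cite{Hor94} or \cite{HKM93}), so $\rho+\max(\varphi,\psi)$ is $L_\delta$-subharmonic. Letting $\delta\to0$ yields condition~(ii) for $\max(\varphi,\psi)$; condition~(i) of the definition is the special case $T=0$, $\delta=1$.
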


%%%%%%%%%%%%%%%%%%%%%%%%%%%%%%%%%%%%%%%%%%%%%%%%%
\section{Viscosity solutions vs. potential solutions}
In this section we introduce the notion of viscosity (sub, super)-solutions to degenerate complex Hessian equations and systematically compare them with potential ones. We prove an important comparison principle which is the key point in the proof of our main results. The idea of our proof is taken from \cite{EGZ11}, \cite{YW10, CC95},  \cite{CIL92}.  
\begin{definition}
Let $u: \Omega\rightarrow \R\cup\{-\infty\}$ be a function. Let $\f$ be a $\Cc^2$ function in a neighborhood of $x_0\in \Omega.$ We say that $\f$ touches $u$ from above (resp. below) at $x_0$ if $\f(x_0)=u(x_0)$ and $\f(x)\geq u(x)$ (resp. $\f(x)\leq u(x)$) for every $x$ in a neighborhood of $x_0.$
\end{definition}
\begin{definition}\label{def: viscosity subsolution} 
An upper semicontinuous function $\f: \Omega\rightarrow \R\cup \{-\infty\}$ is  a viscosity subsolution to
\begin{equation}\label{eq: heq 1}
-(dd^c \f)^m\wedge \beta^{n-m}+F(x,\f)\beta^n=0
\end{equation}
if $\f\not\equiv -\infty$ and for any $x_0\in \Omega$ and any $\Cc^2$ function $q$ which touches $\f$ from above at $x_0$ then 
$$
H_m(q)\geq F(x,q)\beta^n, \ \text{at}\ x_0.
$$
Here we use the notation $H_m(u)=(dd^cu)^m\wedge \beta^{n-m}$ for $u\in \Cc^2(X).$ We also say that $H_m(\f)\geq F(x,q)\beta^n$ \enquote{in the viscosity sense}.
\end{definition}
\begin{definition}\label{def: viscosity supersolution}
A lower semicontinuous function $\f: X\rightarrow \R\cup \{+\infty\}$ is  a viscosity supersolution to (\ref{eq: heq 1}) if $\f\not\equiv +\infty$ and for any $x_0\in X$ and any $\Cc^2$ function $q$ which touches $\f$ from below at $x_0$ then
$$
[(dd^cq)^m\wedge \beta^{n-m}]_+\leq F(x,q)\beta^n, \ \text{at} \ x_0.
$$ 
Here $[\alpha^m\wedge\beta^{n-m}]_+$ is defined to be itself  if $\alpha$ is $m$-positive and $0$ otherwise.
\end{definition}

\begin{rmk} If $u\in \Cc^2(\Omega)$ then $H_m(\f)\geq  F(x,\f)\beta^n$ (or $[H_m(\f)]_+\leq F(x,\f)\beta^n$) holds in the viscosity sense iff it holds in the usual sense.
\end{rmk}

\begin{definition}
A function $\f: X\rightarrow \R$ is a viscosity solution to (\ref{eq: hessian}) if it is both a subsolution and a supersolution. Thus, a viscosity solution is automatically continuous. 
\end{definition}
The notion of viscosity subsolutions is stable under taking maximum. It is also stable along monotone sequences as the following lemma shows.  
\begin{lemma}\label{lem: decreasing sequence of viscosity subsolution}
Assume that $F: \Omega\times \R\rightarrow \R^+$ is a continuous function. Let $(\f_j)$ be a monotone sequence of viscosity subsolutions of equation
\begin{equation}\label{eq: limsup}
-(dd^c u)^m\wedge \beta^{n-m} + F(x,u)\beta^n=0,
\end{equation}
If $\f_j$ is uniformly bounded from above  and $\f:=(\lim\f_j)^*\not \equiv -\infty$ then $\f$ is also a viscosity subsolution of (\ref{eq: limsup}).
\end{lemma}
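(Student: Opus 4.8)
The plan is to reduce the monotone case to two sub-cases, decreasing and increasing, and in each case verify the viscosity subsolution inequality at a point $x_0$ where a $\Cc^2$ test function $q$ touches $\f=(\lim \f_j)^*$ from above. For the decreasing case, I would use the classical fact (as in \cite{CIL92}, \cite{EGZ11}) that if $q$ touches $\f$ from above at $x_0$, then after shifting $q$ by a strictly convex quadratic perturbation $q_\varepsilon(x) = q(x) + \varepsilon|x - x_0|^2 - \varepsilon r^2$ on a small ball, $q_\varepsilon$ touches $\f_j$ from above at some point $x_j$ near $x_0$ for $j$ large; here one uses that $\f_j \downarrow \f$ together with upper semicontinuity and the maximum principle to locate the contact points $x_j$, and then $x_j \to x_0$, $\f_j(x_j) \to \f(x_0)$ as $\varepsilon \to 0$ and $j \to \infty$. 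Applying the subsolution property of $\f_j$ at $x_j$ gives $H_m(q_\varepsilon)(x_j) \geq F(x_j, \f_j(x_j))\beta^n$; letting $j \to \infty$ and then $\varepsilon \to 0$, using continuity of $F$ and of the coefficients of $H_m(q_\varepsilon)$ (which are polynomial in the second derivatives of $q$), yields $H_m(q)(x_0) \geq F(x_0, q(x_0))\beta^n$. Note in passing that $\f$ is $m$-subharmonic by Proposition~\ref{prop: basic property of m subharmonic functions}(iv) and is not $-\infty$ by hypothesis, so it is a legitimate candidate subsolution.

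For the increasing case the argument is even more direct: since $\f_j \uparrow \f$ and each $\f_j$ is upper semicontinuous, $\f$ is already lower semicontinuous, and combined with $\f = \f^*$ being upper semicontinuous we actually get that $\f$ is continuous on the set where it is finite. More importantly, if $q$ touches $\f$ from above at $x_0$, then $q(x) \geq \f(x) \geq \f_j(x)$ for all $j$ and all $x$ near $x_0$, while $q(x_0) = \f(x_0) = \lim_j \f_j(x_0)$. So $q - \f_j$ attains its minimum (near $x_0$) at points $x_j$ with $q(x_j) - \f_j(x_j) = \inf(q - \f_j) \to 0$. Again perturbing $q$ slightly to get strict contact and extracting a convergent subsequence $x_j \to x_\infty$, one checks $x_\infty = x_0$ (using that $q - \f$ has a strict minimum there after perturbation) and $\f_j(x_j) \to \f(x_0)$; then the subsolution inequality for $\f_j$ at $x_j$ passes to the limit exactly as before.

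The main obstacle, and the point requiring genuine care, is the localization of the contact points $x_j$ and the verification that $\f_j(x_j)$ converges to $q(x_0) = \f(x_0)$ rather than to something smaller. In the decreasing case this is the standard but slightly delicate semicontinuity argument: one needs the perturbation $q_\varepsilon$ to touch $\f_j$ from above somewhere in the small ball, which follows because on the boundary sphere $q_\varepsilon > \f \geq$ (no help directly, since $\f_j \geq \f$ fails) — so in fact one must argue instead that $q_\varepsilon - \f_j$ attains a minimum in the closed ball, that minimum being $\leq q_\varepsilon(x_0) - \f_j(x_0) \leq \varepsilon r^2$ which tends to $0$... and then one uses Hartogs-type / Dini-type control: $\f_j \downarrow \f$ and $\f$ upper semicontinuous forces $\limsup_j \f_j(x_j) \leq \f(x_0)$ along any sequence $x_j \to x_0$, while the minimum being small forces $\liminf_j \f_j(x_j) \geq q(x_0) - \varepsilon r^2 = \f(x_0) - \varepsilon r^2$. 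Combining and sending $\varepsilon \to 0$ pins down the limit. I expect this bookkeeping — rather than any deep new idea — to be where the proof spends its effort; everything else is continuity of $H_m$ in the jet variables and of $F$.
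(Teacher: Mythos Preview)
Your overall strategy --- perturb the test function, locate approximate contact points for $\f_j$, pass to the limit --- is exactly the paper's, and your treatment of the decreasing case is essentially correct (there $\f_j\downarrow\f$ pointwise with no regularization issue, so $\f_j(x_0)\to\f(x_0)$ and the bookkeeping you describe goes through).

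The gap is in the increasing case. You assert that $q(x_0)=\f(x_0)=\lim_j\f_j(x_0)$, and from this that $\f$ is lower semicontinuous where finite. Both claims are false in general: since $\f=(\sup_j\f_j)^*$ involves the upper semicontinuous regularization, one can have $\sup_j\f_j(x_0)<\f(x_0)$, and $\sup_j\f_j$ need not be lower semicontinuous (e.g.\ take $\f_j=-\mathds{1}_{B(0,1/j)}$ on $\R$). So the pointwise convergence $\f_j(x_0)\to\f(x_0)$ that drives your localization argument is simply unavailable at $x_0$.

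The paper sidesteps this by not splitting into cases. It isolates two facts valid for \emph{any} monotone sequence: (i) whenever $z_j\to z$ one has $\limsup_j\f_j(z_j)\le\f(z)$, and (ii) there exist $x_j\to x_0$ (not necessarily equal to $x_0$) and a subsequence with $\f_j(x_j)\to\f(x_0)$, obtained by a diagonal argument from the definition of the upper envelope. With these in hand, one takes $y_j$ to maximize $\f_j-q-\epsilon|x-x_0|^2$ on a small closed ball; the inequality $\f_j(x_j)-q(x_j)-\epsilon|x_j-x_0|^2\le\f_j(y_j)-q(y_j)-\epsilon|y_j-x_0|^2$ combined with (i) forces any subsequential limit of $(y_j)$ to be $x_0$, and then $\f_j(y_j)\to\f(x_0)$ follows. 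Your increasing-case argument can be repaired along exactly these lines: replace the false claim $\f_j(x_0)\to\f(x_0)$ by the existence of a nearby sequence $x_j\to x_0$ with $\f_j(x_j)\to\f(x_0)$, and the rest of your perturbation argument will work.
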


\begin{proof} The proof can be found in \cite{CIL92}. For convenience, we reproduce it here.
Observe that if $z_j\to z$ then  
$$
\limsup_{j\to+\infty} \f_j(z_j)\leq \f(z).
$$
Fix $x_0\in \Omega$ and $q$ a $\Cc^2$ function in a neighborhood of $x_0$, say $B(x_0,r)\subset \Omega$ which touches $\f$ from above at $x_0$.  We can choose a sequence  $(x_j)\subset B=\bar{B}(x_0,r/2)$ converging to $x_0$ and  a subsequence of $(\f_j)$ (still denoted by $(\f_j)$) such that 
$$
\f_j(x_j)\to \f(x_0).
$$ 
Fix $\epsilon>0.$ For each $j,$ let $y_j$ be the maximum point of $\f_j-q-\epsilon\vert x-x_0\vert^2$ on $B.$ Then
\begin{eqnarray}\label{eq: viscosity solution 1}
\f_j(x_j)-q(x_j)-\epsilon\vert x_j-x_0\vert^2\leq \f_j(y_j)-q(y_j)-\epsilon\vert y_j-x_0\vert^2.
\end{eqnarray}
We claim that $y_j\to x_0.$ Indeed, assume  that $y_j\to y\in B.$ Letting $j\to +\infty$ in (\ref{eq: viscosity solution 1}) and noting that $\limsup\f_j(y_j)\leq \f(y)$, we get 
$$
0\leq \f(y)-q(y)-\epsilon\vert y-x_0\vert^2.
$$ 
Remember that $q$ touches $\f$ above in $B$ at $x_0$ and $y\in B.$ Thus, the above inequality implies that $y=x_0,$ which means $y_j\to x_0.$ Then again by (\ref{eq: viscosity solution 1}) we deduce that $\f_j(y_j)\to \f(x_0).$

For $j$ large enough, the function 
$$
q+\epsilon\vert x-x_0\vert^2+\f_j(y_j)-q(y_j)-\epsilon\vert y_j-x_0\vert^2 
$$
 touches $\f_j$ from above at $y_j.$ Thus
 $$
 H_m(q+\epsilon \vert x-x_0\vert^2)(y_j)\geq F(y_j,\f_j(y_j))\beta^n.
 $$
It suffices now to let $j\to+\infty$.
\end{proof}

When $F\equiv 0,$ viscosity subsolutions of (\ref{eq: hessian}) are exactly $m$-subharmonic functions.
\begin{lemma}\label{lem: viscosity sense and usual sense}  A function $u$ is $m$-subharmonic in $\Omega$  if and only if it is a viscosity subsolution of
\begin{equation}\label{eq: potential vs viscosity 0 rhs}
-(dd^c u)^m\wedge \beta^{n-m}= 0.
\end{equation}
\end{lemma}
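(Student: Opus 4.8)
The plan is to prove both implications by comparing $u$ with its standard regularizations $u_\epsilon:=u\star\chi_\epsilon$, which decrease to $u$ on $\Omega_\epsilon$ as soon as $u$ is subharmonic, and by exploiting that for $\Cc^2$ functions $m$-subharmonicity is equivalent to pointwise $m$-positivity of $dd^c$ (Proposition \ref{prop: basic property of m subharmonic functions}(i)). Two elementary facts about the cone $\Gt_m$ will be used repeatedly: it is closed under adding a positive semidefinite matrix (since $\Gamma_m$ is a convex cone containing the positive orthant, because $\Gamma_n\subset\Gamma_m$), and $\Gt_m=\{A\in\Hc : \St_m(A+tI)\geq 0,\ \forall t\geq 0\}$.

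\noindent$(\Rightarrow)$ Assume $u\in\Pm(\Omega)$. By Proposition \ref{prop: basic property of m subharmonic functions}(iii) each $u_\epsilon\in\Pm(\Omega_\epsilon)$ is smooth, so $dd^cu_\epsilon$ is $m$-positive everywhere by Proposition \ref{prop: basic property of m subharmonic functions}(i), and $u_\epsilon\downarrow u$. Let $q$ be $\Cc^2$ touching $u$ from above at $x_0$, pick $\bar B(x_0,r)\Subset\Omega$ on which $q\geq u$, and fix $\eta>0$. Let $x_\epsilon$ maximize $u_\epsilon-q-\eta|x-x_0|^2$ over $\bar B(x_0,r)$. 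Exactly as in the proof of Lemma \ref{lem: decreasing sequence of viscosity subsolution}, using $u_\epsilon\downarrow u$ and $\limsup_\epsilon u_\epsilon(x_\epsilon)\leq u(\lim x_\epsilon)$, one gets $x_\epsilon\to x_0$, so $x_\epsilon$ is interior for $\epsilon$ small. At that interior maximum the real Hessian of $u_\epsilon-q-\eta|x-x_0|^2$ is $\leq 0$, hence, as Hermitian matrices, $dd^cu_\epsilon(x_\epsilon)\leq dd^c\big(q+\eta|x-x_0|^2\big)(x_\epsilon)$. Since the left side lies in $\Gt_m$, so does the right side; letting $\epsilon\to 0$ (closedness of $\Gt_m$) and then $\eta\to 0$ gives $dd^cq(x_0)\in\Gt_m$, in particular $H_m(q)(x_0)=\St_m(dd^cq(x_0))\beta^n\geq 0$. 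Thus $u$ is a viscosity subsolution of \eqref{eq: potential vs viscosity 0 rhs}.

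\noindent$(\Leftarrow)$ Assume $u$ is a viscosity subsolution. The key observation is that the seemingly weak condition $\St_m(dd^cq(x_0))\geq 0$ self-improves: if $q$ touches $u$ from above at $x_0$, so does $q+\delta|x-x_0|^2$ for every $\delta\geq 0$, whence $\St_m\big(dd^cq(x_0)+c\delta I\big)\geq 0$ for all $\delta\geq 0$ (with $c>0$ the constant relating $dd^c|z|^2$ to $\beta$), and the characterization of $\Gt_m$ above forces $dd^cq(x_0)\in\Gt_m$. Since $\Gt_m\subset\Gt_1$ this gives $\Delta q(x_0)\geq 0$, so $u$ is a viscosity subsolution of $\Delta u=0$; being u.s.c. and $\not\equiv-\infty$, $u$ is subharmonic, hence $L^1_{loc}$ and $u_\epsilon\downarrow u$ on $\Omega_\epsilon$. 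Moreover $u_\epsilon$ is again a viscosity subsolution: it is an average of translates of $u$, and averaging preserves viscosity subsolutions of degenerate elliptic equations (cf. \cite{CIL92}; here one uses that $\Gt_m$ is convex and $\St_m^{1/m}$ concave on it). As $u_\epsilon$ is $\Cc^2$, applying the key observation with $q=u_\epsilon$ shows $dd^cu_\epsilon$ is $m$-positive throughout $\Omega_\epsilon$, so $u_\epsilon\in\Pm(\Omega_\epsilon)$ by Proposition \ref{prop: basic property of m subharmonic functions}(i). Since $(u_\epsilon)$ is locally uniformly bounded above and decreases to $u=u^\star$, Proposition \ref{prop: basic property of m subharmonic functions}(iv) yields $u=(\limsup_\epsilon u_\epsilon)^\star\in\Pm(\Omega)$.

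\noindent The delicate step is the self-improvement argument in $(\Leftarrow)$: recognizing that testing against the one-parameter family $q+\delta|x-x_0|^2$ upgrades $\St_m\geq 0$ to genuine membership in $\Gt_m$, together with the (standard, but using the recorded convexity of $\Gt_m$ and concavity of $\St_m^{1/m}$) fact that regularization preserves viscosity subsolutions. Once these are in place, the remainder is bookkeeping with the decreasing approximants $u_\epsilon$.
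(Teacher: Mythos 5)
The forward direction and the ``self-improvement'' observation (testing against $q+\delta|x-x_0|^2$ upgrades $\St_m(dd^cq(x_0))\geq 0$ to $dd^cq(x_0)\in\Gt_m$) are both correct and essentially identical to the paper's treatment.

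The backward direction has a real gap at the step ``averaging preserves viscosity subsolutions of degenerate elliptic equations (cf.\ \cite{CIL92}).'' This is not a theorem you will find in \cite{CIL92}, and it is not true for a generic nonlinear degenerate elliptic operator. The viscosity subsolution property is defined via test functions touching $u_\epsilon$ from above; a $q$ touching $u_\epsilon=u\star\chi_\epsilon$ from above at $x_0$ gives no touching information about the individual translates $u(\cdot-y)$, so the viscosity inequality cannot be propagated directly to $u_\epsilon$. Convexity of $\Gt_m$ and concavity of $\St_m^{1/m}$ are the right structural properties, but they only tell you that \emph{pointwise a.e.\ information} about $dd^cu$ would average well; to get such information from the viscosity condition one needs either the equivalence with a distributional/potential formulation (which is precisely what the lemma is trying to establish, making the argument circular), or a sup-convolution plus an Alexandroff-type decomposition of the Hessian measure, which you have not carried out. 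The known, non-circular mollification statements of this type are for \emph{linear} constant-coefficient operators, where viscosity and distributional subsolutions are already known to coincide.

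The paper closes exactly this gap by a different and correct route: from $dd^cq(x_0)\in\Gt_m$ (your ``key observation,'' also used there) it deduces $L_\alpha q(x_0)\geq 0$ for \emph{every} constant-coefficient linear operator $L_\alpha = dd^c(\cdot)\wedge\alpha_1\wedge\cdots\wedge\alpha_{m-1}\wedge\beta^{n-m}$ with $\alpha_j$ $m$-positive, so $u$ is a viscosity subsolution of each linear elliptic $L_\alpha$. For \emph{linear} operators, H\"ormander's Proposition 3.2.10' (\cite{Hor94}) converts the viscosity inequality into the distributional one, giving $L_\alpha u\geq 0$ in the sense of distributions for all such $\alpha$, which is exactly $m$-subharmonicity by continuity in $\alpha$. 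You can repair your proof either by adopting this route, or by replacing the ordinary mollification $u\star\chi_\epsilon$ with the sup-convolution $u^\delta$ (which genuinely preserves viscosity subsolutions) and then justifying, via the Lebesgue decomposition of $D^2u^\delta$ for a semiconvex function and positivity of the singular part, that $u^\delta\in\Pm$. As written, the appeal to averaging is the missing step.
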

\begin{proof} Assume  that $u$ is  $m$-subharmonic in $\Omega$ and let $u_{\epsilon}$ be its standard smooth regularization. Then $u_{\epsilon}$ is $m$-subharmonic and smooth, hence $u_{\epsilon}$ is a classical subsolution of (\ref{eq: potential vs viscosity 0 rhs}). Thus,  it follows from Lemma \ref{lem: decreasing sequence of viscosity subsolution} that $u$ is a viscosity subsolution of  (\ref{eq: potential vs viscosity 0 rhs}).

Conversely, assume that $u$ is a viscosity subsolution of (\ref{eq: potential vs viscosity 0 rhs}). Fix 
$\alpha_1,...,\alpha_{m-1}$ $m$-positive (1,1)-forms with constant coefficients such that 
$$
\alpha_1\wedge...\wedge \alpha_{m-1}\wedge \beta^{n-m}
$$
is strictly positive. Let $x_0\in \Omega$ and $q\in \mathcal{C}^2(V_{x_0})$ such that $u-q$ has a local maximum at $x_0.$ Then for any $\epsilon>0$, $q+\epsilon \vert z-z_0\vert^2$ also touches $u$ from above. By the definition of viscosity subsolutions, we have 
$$
(dd^c q +\epsilon \beta)^m\wedge \beta^{n-m}\geq 0, \forall \epsilon>0,
$$
which means that  the Hessian matrix $\dfrac{\partial^2q }{\partial z_j\partial \bar{z}_k}(x_0)$ is $m$-positive. Hence 
$$
L_{\alpha} q:=dd^c q \wedge \alpha_1\wedge...\wedge \alpha_{m-1}\wedge \beta^{n-m} \geq 0,
$$ 
holds at $x_0.$ 

This implies $L_{\alpha} u\geq 0$ in the viscosity sense. In appropriate complex coordinates this constant coefficient differential operator is the Laplace operator. Hence, \cite{Hor94} Proposition 3.2.10' p. 147 implies that $u$ is $L_{\alpha}$-subharmonic hence is $L^1_{\rm loc}(V_{x_0})$ and satisfies $L_{\alpha} u\geq 0$ in the sense of distributions. Since $\alpha_1,...,\alpha_{m-1}$ were taken arbitrarily, by continuity we have 
$$
dd^c u\wedge \alpha_1\wedge...\wedge \alpha_{m-1}\wedge \beta^{n-m}\geq 0 
$$
in the sense of distributions for any $m$-positive (1,1)-forms $\alpha.$ Therefore, $u$ is $m$-subharmonic.
\end{proof}
\begin{corollary}
Lemma \ref{lem: decreasing sequence of viscosity subsolution} still holds if the sequence $\f_j$ is not monotone.
\end{corollary}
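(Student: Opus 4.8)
The plan is to reduce the non-monotone case to the monotone case treated in Lemma \ref{lem: decreasing sequence of viscosity subsolution}. The key observation is that the upper envelope of any family of viscosity subsolutions, after upper semicontinuous regularization, is again a viscosity subsolution; once we have this, an arbitrary sequence $(\f_j)$ with $\f:=(\lim_j \f_j)^*\not\equiv-\infty$ can be handled by passing to the ``running suprema.''

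Concretely, first I would establish the following auxiliary fact: if $(\f_\alpha)$ is any family of viscosity subsolutions of (\ref{eq: limsup}) that is locally uniformly bounded from above, then $\f:=(\sup_\alpha \f_\alpha)^*\not\equiv-\infty$ is again a viscosity subsolution. To see this, fix $x_0$ and a $\Cc^2$ function $q$ touching $\f$ from above at $x_0$ on a ball $B(x_0,r)$. Choose indices $\alpha_j$ and points $x_j\to x_0$ with $\f_{\alpha_j}(x_j)\to\f(x_0)$ (using that $\f$ is the usc regularization of the sup). For fixed $\epsilon>0$ let $y_j$ be a maximum point of $\f_{\alpha_j}-q-\epsilon|x-x_0|^2$ on $\bar B(x_0,r/2)$; exactly as in the proof of Lemma \ref{lem: decreasing sequence of viscosity subsolution}, the inequality
$$
\f_{\alpha_j}(x_j)-q(x_j)-\epsilon|x_j-x_0|^2\leq \f_{\alpha_j}(y_j)-q(y_j)-\epsilon|y_j-x_0|^2,
$$
together with $\limsup_j \f_{\alpha_j}(y_j)\leq\f(y)$ whenever $y_j\to y$, forces $y_j\to x_0$ and $\f_{\alpha_j}(y_j)\to\f(x_0)$. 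For $j$ large the function $q+\epsilon|x-x_0|^2+\bigl(\f_{\alpha_j}(y_j)-q(y_j)-\epsilon|y_j-x_0|^2\bigr)$ touches $\f_{\alpha_j}$ from above at $y_j$, so $H_m(q+\epsilon|x-x_0|^2)(y_j)\geq F(y_j,\f_{\alpha_j}(y_j))\beta^n$; letting $j\to\infty$ and then $\epsilon\to 0$ and using continuity of $F$ gives $H_m(q)(x_0)\geq F(x_0,\f(x_0))\beta^n$. (In fact this is essentially the content of the proof already given, just stated for a family rather than a sequence.)

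Now for a general, not necessarily monotone, sequence $(\f_j)$ bounded above by a constant with $\f=(\lim_j\f_j)^*\not\equiv-\infty$, set $g_k:=(\sup_{j\geq k}\f_j)^*$. By the auxiliary fact each $g_k$ is a viscosity subsolution of (\ref{eq: limsup}); moreover $(g_k)$ is non-increasing, and one checks that $(\lim_k g_k)^*=\f$ (a standard fact relating $\limsup$ and usc regularizations: $\lim_k g_k\geq\limsup_j\f_j$ pointwise, hence $\geq\f$; conversely $g_k$ is usc and $\leq$ the usc regularization of $\sup_{j\geq k}\f_j$, and taking the decreasing limit and regularizing recovers $\f$ — here the hypothesis $\f\not\equiv-\infty$ is used to ensure the limit is not identically $-\infty$). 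Applying Lemma \ref{lem: decreasing sequence of viscosity subsolution} to the monotone sequence $(g_k)$ then shows $\f$ is a viscosity subsolution, as desired.

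The only delicate point is verifying $(\lim_k g_k)^*=\f$ and $\lim_k g_k\not\equiv-\infty$; everything else is a verbatim repetition of the envelope argument already carried out in the proof of Lemma \ref{lem: decreasing sequence of viscosity subsolution}. I expect this bookkeeping with upper semicontinuous regularizations — rather than any new analytic input — to be the main (and only real) obstacle.
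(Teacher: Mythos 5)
The structure of your argument (pass to running upper envelopes $g_k=(\sup_{j\geq k}\f_j)^*$, observe they are non-increasing, and invoke the monotone Lemma \ref{lem: decreasing sequence of viscosity subsolution}) is exactly the paper's route, and your auxiliary envelope fact — that the usc regularization of the sup of an arbitrary locally bounded family of viscosity subsolutions is again a subsolution — is a correct and slightly more economical way to see that each $g_k$ is a subsolution than the paper's two-stage reduction via $v_l=\max(\f_j,\dots,\f_{j+l})$.

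However, there is a genuine gap precisely where you flagged a ``delicate point'' and then waved at it. You need $\lim_k g_k=\f$, equivalently $(\lim_k g_k)^*=\f$, and your justification (``taking the decreasing limit and regularizing recovers $\f$'') is circular and in fact \emph{false} for general upper semicontinuous $\f_j$. Concretely, take on $\R$ the spikes $\f_j(x)=\max(1-j\,|x-1/j|,0)$: then $\limsup_j\f_j\equiv 0$, so $\f\equiv 0$, but for every $k$ the usc regularization $g_k=(\sup_{j\geq k}\f_j)^*$ equals $1$ at the origin (spikes accumulate there), so $\lim_k g_k(0)=1\neq 0=\f(0)$. The identity $\lim_k g_k=\f$ therefore requires more than upper semicontinuity: it requires knowing that each regularization $(\sup_{j\geq k}\f_j)^*$ changes $\sup_{j\geq k}\f_j$ only on a negligible set, which is a subharmonic-type phenomenon. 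This is exactly what the paper supplies: by Lemma \ref{lem: viscosity sense and usual sense} each $\f_j$ is $m$-subharmonic (since $F\geq 0$, a viscosity subsolution of (\ref{eq: limsup}) is a fortiori one of $-(dd^cu)^m\wedge\beta^{n-m}=0$), so each $u_j=(\sup_{k\geq j}\f_k)^*$ agrees with $\sup_{k\geq j}\f_k$ off a negligible set, $\lim_j u_j$ is $m$-subharmonic as a decreasing limit, and therefore coincides with $(\limsup_j\f_j)^*=\f$. Your proof should insert this invocation of $m$-subharmonicity; with it, the argument closes, and without it the final reduction does not go through.
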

\begin{proof}
For each $j,$ set
$$
u_j=(\sup_{k\geq j} \f_k)^*,  \ \ v_l:=\max(\f_j,...,\f_{j+l}).
$$
Since the notion of viscosity subsolution is stable under taking the maximum, we deduce that $v_l$ is a viscosity subsolution of (\ref{eq: limsup}). Observe that $u_j=(\sup_{l\geq 0} v_l)^*$ and the sequence $(v_l)$ is monotone. It follows from what we have done before that $u_j$ is a viscosity subsolution of (\ref{eq: limsup}).  By Lemma  \ref{lem: viscosity sense and usual sense}  each $\f_j$ is $m$-subharmonic. Hence, $u_j\downarrow \f$ and the proof is complete.
\end{proof} 
For real $(1,1)$-form $\alpha$, we denote by
$$
S_m(\alpha):=\frac{\alpha^m\wedge \beta^{n-m}}{\beta^n}.
$$
Set 
$$
U_m:=\{\alpha\in \Gt_m\ \text{of constant coefficients such that}\ S_m(\alpha)=1\}.
$$
It is elementary to prove the following lemma: 
\begin{lemma}\label{lem: Gav}
Let $\alpha$ be a real $m$-positive $(1,1)$-form. Then the following identity holds 
$$
(S_m(\alpha))^{1/m}=\inf\Big\{\frac{\alpha\wedge\alpha_1\wedge...\wedge\alpha_{m-1}\wedge \beta^{n-m}}{\beta^n} \ / \ \alpha_j\in U_m, \forall j\Big\}.
$$
\end{lemma}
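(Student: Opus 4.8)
The plan is to read the identity pointwise and split it into the two inequalities. Since the quantity $\alpha\wedge\alpha_1\wedge\cdots\wedge\alpha_{m-1}\wedge\beta^{n-m}/\beta^n$ evaluated at a point $P$ depends only on the Hermitian matrices representing $\alpha,\alpha_1,\dots,\alpha_{m-1}$ at $P$, and since the $\alpha_j$ run over constant-coefficient forms, it suffices to prove the identity with $\alpha$ replaced by an arbitrary fixed matrix $A\in\Gt_m$ (identified with its constant associated $(1,1)$-form). I would then prove ``$\geq$'' by the G{\aa}rding inequality and ``$\leq$'' by choosing an explicit (near-)minimizing family $\alpha_j$.

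For ``$\geq$'': from $\binom{n}{k}\alpha^k\wedge\beta^{n-k}=\St_k(A)\beta^n$ and the fact that the wedge product of $(1,1)$-forms is symmetric and multilinear, one sees that the full polarization of the hyperbolic polynomial $\St_m$ is $\binom{n}{m}\,\alpha_1\wedge\cdots\wedge\alpha_m\wedge\beta^{n-m}/\beta^n$. The G{\aa}rding inequality for $\St_m$ on the cone $\Gt_m$ (\cite{Ga59}; it is the infinitesimal form of the concavity of $\St_m^{1/m}$ on $\Gt_m$ recorded before the statement) therefore gives, after dividing by $\binom{n}{m}$,
$$
\frac{\alpha\wedge\alpha_1\wedge\cdots\wedge\alpha_{m-1}\wedge\beta^{n-m}}{\beta^n}\;\geq\;S_m(\alpha)^{1/m}\prod_{j=1}^{m-1}S_m(\alpha_j)^{1/m}
$$
for all $\alpha,\alpha_1,\dots,\alpha_{m-1}\in\Gt_m$. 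When each $\alpha_j\in U_m$ the product is $1$, so the left side is $\geq S_m(\alpha)^{1/m}$; taking the infimum over $U_m$ gives the lower bound.

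For ``$\leq$'': when $S_m(\alpha)>0$ the natural candidate is $\alpha_1=\dots=\alpha_{m-1}=\alpha/S_m(\alpha)^{1/m}$, which lies in $U_m$ because $S_m$ is $m$-homogeneous, and a one-line wedge computation shows it realizes exactly the value $S_m(\alpha)^{1/m}$, so the infimum is attained. To cover the degenerate stratum $S_m(\alpha)=0$ (and, indeed, all $\alpha$ at once), I would instead test against $\alpha_j:=(\alpha+\epsilon\beta)/S_m(\alpha+\epsilon\beta)^{1/m}\in U_m$, using that $\alpha+\epsilon\beta$ is strictly $m$-positive; writing $\alpha=(\alpha+\epsilon\beta)-\epsilon\beta$ and discarding the nonnegative term $\epsilon\,(\alpha+\epsilon\beta)^{m-1}\wedge\beta^{n-m+1}$ gives that the corresponding value is $\leq S_m(\alpha+\epsilon\beta)^{1/m}$, and letting $\epsilon\to 0$ yields $\leq S_m(\alpha)^{1/m}$ by continuity of $S_m$. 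The only non-elementary ingredient is the G{\aa}rding inequality, which is already encoded in the concavity of $\St_m^{1/m}$ stated in the preliminaries; the sole point requiring a little care is the passage to the boundary stratum $S_m(\alpha)=0$, handled by the $\epsilon\beta$-perturbation above.
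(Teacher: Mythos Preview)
Your argument is correct. The paper does not actually prove this lemma; it simply declares it ``elementary'' and states it. Your proof supplies precisely the details one would expect: the lower bound is G{\aa}rding's inequality for the polarized form of the hyperbolic polynomial $S_m$ on the cone $\Gt_m$, and the upper bound is attained by the normalized choice $\alpha_j=\alpha/S_m(\alpha)^{1/m}$ when $S_m(\alpha)>0$, with the $\epsilon\beta$-perturbation handling the boundary case $S_m(\alpha)=0$. One minor remark: since $S_m(\alpha)=\alpha^m\wedge\beta^{n-m}/\beta^n$ is itself a hyperbolic polynomial (a positive multiple of $\St_m$), you may invoke G{\aa}rding's inequality for $S_m$ directly and avoid the $\binom{n}{m}$ bookkeeping altogether; but what you wrote is also fine.
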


Now, we compare viscosity and potential subsolutions when the right-hand side $F(x,t)$ does not depend on  $t.$
\begin{prop}\label{prop: viscosity subsolution vs potential subsolution continuous rhs}
Let $\f$ be a bounded upper semicontinuous function in $\Omega$ and $0\leq f$ be a continuous function. 

(i) If $\f$ is $m$-subharmonic such that 
\begin{equation}\label{eq: viscosity vs potential 1}
H_m(\f)\geq f\beta^n
\end{equation} 
in the potential sense then it also holds in the viscosity sense. 

(ii) Conversely, if (\ref{eq: viscosity vs potential 1}) holds in the viscosity sense then $\f$ is $m$-subharmonic and the inequality holds in the potential sense.
\end{prop}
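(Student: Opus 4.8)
The plan is to treat the two directions separately, using the regularization properties of $m$-subharmonic functions (Proposition~\ref{prop: basic property of m subharmonic functions}(iii)) together with Lemma~\ref{lem: decreasing sequence of viscosity subsolution} for direction (i), and Lemma~\ref{lem: Gav} together with a constant-coefficient comparison argument, as in the proof of Lemma~\ref{lem: viscosity sense and usual sense}, for direction (ii). Throughout, the key elementary fact is that for a $\Cc^2$ function $q$ that is $m$-positive (equivalently, $dd^c q \in \Gt_m$), one has $H_m(q) = (S_m(dd^c q))\,\beta^n$ and $(S_m(dd^c q))^{1/m}$ is given by the infimum formula of Lemma~\ref{lem: Gav}, which turns the nonlinear inequality $H_m(q)\geq f\beta^n$ into the family of \emph{linear} inequalities $dd^c q \wedge \alpha_1\wedge\cdots\wedge\alpha_{m-1}\wedge\beta^{n-m}\geq f^{1/m}\beta^n$ over $\alpha_j\in U_m$.

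For part (i): assume $\f$ is $m$-subharmonic and $H_m(\f)\geq f\beta^n$ in the potential (Bedford--Taylor) sense. The natural attempt is to approximate $\f$ by its sup-convolutions or standard mollifications $\f_\epsilon = \f\star\chi_\epsilon$, which are $m$-subharmonic and smooth on $\Omega_\epsilon$ and decrease to $\f$; then invoke Lemma~\ref{lem: decreasing sequence of viscosity subsolution}. The subtlety is that mollification does not obviously preserve the inequality $H_m(\f_\epsilon)\geq f\beta^n$ on the nose, because $f$ is merely continuous rather than constant and $H_m$ is not linear. I would get around this by working with a slightly shrunk right-hand side: fix a compact $K\Subset\Omega$ and $\delta>0$; by uniform continuity of $f$ on a neighborhood of $K$ choose $\epsilon$ small so that $f(x)\geq f(y)-\delta$ whenever $|x-y|\leq\epsilon$; then use the infimum characterization of $S_m$ — each linear slice $dd^c\f\wedge\alpha_1\wedge\cdots\wedge\alpha_{m-1}\wedge\beta^{n-m}\geq f^{1/m}\beta^n$ is a distributional inequality that \emph{is} preserved by convolution — to conclude $dd^c\f_\epsilon\wedge\prod\alpha_j\wedge\beta^{n-m}\geq (f^{1/m}\star\chi_\epsilon)\beta^n\geq (f^{1/m}-\delta')\beta^n$ on $K$ for the appropriate $\delta'$. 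Taking the infimum over $\alpha_j\in U_m$ and using Lemma~\ref{lem: Gav} gives $H_m(\f_\epsilon)\geq (f^{1/m}-\delta')^m\beta^n$ pointwise on $K$, i.e.\ $\f_\epsilon$ is a classical (hence viscosity) subsolution of the equation with right-hand side $(f^{1/m}-\delta')^m$. Lemma~\ref{lem: decreasing sequence of viscosity subsolution} then yields that $\f$ is a viscosity subsolution for that perturbed right-hand side on $K$; letting $\delta\to 0$ and exhausting $\Omega$ by compacts gives the claim. I expect this bookkeeping with the perturbed right-hand side to be the main technical obstacle in (i), though it is routine.

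For part (ii): assume $H_m(\f)\geq f\beta^n$ holds in the viscosity sense. First, taking $f\equiv 0$ in the hypothesis and applying Lemma~\ref{lem: viscosity sense and usual sense} already shows $\f$ is $m$-subharmonic, in particular $\f\in\loc$ and $H_m(\f)$ is well-defined as a positive measure by Lemma~\ref{lem: symmetric}. To upgrade to $H_m(\f)\geq f\beta^n$ in the potential sense, I would again pass through linear slices: fix $\alpha_1,\dots,\alpha_{m-1}\in U_m$ with constant coefficients. If $q\in\Cc^2$ touches $\f$ from above at $x_0$, then $q+\epsilon|z-z_0|^2$ does too, so $dd^c q + \epsilon\beta\in\Gt_m$ at $x_0$, hence $dd^c q\in\Gt_m$ at $x_0$, and the viscosity inequality reads $S_m(dd^c q)(x_0)\geq f(x_0)$; by Lemma~\ref{lem: Gav} applied at the point $x_0$ this forces $dd^c q\wedge\alpha_1\wedge\cdots\wedge\alpha_{m-1}\wedge\beta^{n-m}\geq f^{1/m}\beta^n$ at $x_0$. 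Thus $\f$ is a viscosity subsolution of the linear equation $L_\alpha u = f^{1/m}\beta^n$, whose symbol is (in suitable coordinates) a multiple of the Laplacian; by the standard viscosity-versus-distributions equivalence for linear elliptic operators (e.g.\ \cite{Hor94}, as cited in the proof of Lemma~\ref{lem: viscosity sense and usual sense}) this gives $dd^c\f\wedge\alpha_1\wedge\cdots\wedge\alpha_{m-1}\wedge\beta^{n-m}\geq f^{1/m}\beta^n$ in the sense of distributions. Now let the $\alpha_j$ range over $U_m$: at each point the Bedford--Taylor density of $dd^c\f\wedge\alpha_1\wedge\cdots\wedge\beta^{n-m}$ with respect to $\beta^n$ is at least $f^{1/m}$, and taking the (essential) infimum over $\alpha_j\in U_m$ together with the pointwise identity of Lemma~\ref{lem: Gav} for the locally bounded $m$-subharmonic $\f$ yields $H_m(\f)\geq f\beta^n$ as measures. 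The delicate point here is justifying that the infimum of the densities of the currents $dd^c\f\wedge\prod\alpha_j\wedge\beta^{n-m}$ over the (compact) parameter set $U_m$ equals the density of $H_m(\f)$ — this requires a measurable-selection or monotone-convergence argument over a countable dense subset of $U_m$, using the fact that $\f\in\loc$ so all these currents have $L^1_{loc}$ densities in a common sense; I expect this passage to the infimum of the linear slices to be the main obstacle in (ii).
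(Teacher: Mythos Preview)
Your part (ii) is close in spirit to the paper's argument --- the linear-slice reduction via Lemma~\ref{lem: Gav} is exactly the engine --- but you have correctly flagged the real difficulty and then not resolved it. Passing from the family of distributional inequalities
\[
dd^c\f\wedge\alpha_1\wedge\cdots\wedge\alpha_{m-1}\wedge\beta^{n-m}\ \geq\ f^{1/m}\beta^n
\qquad(\alpha_j\in U_m)
\]
to the nonlinear potential inequality $H_m(\f)\geq f\beta^n$ by ``taking the infimum of densities'' is not legitimate as stated: these currents are measures, not functions, so there is no density to take an infimum of, and no measurable-selection argument over $U_m$ fixes that. The paper removes this obstacle by inserting a regularization step: once $L_\alpha\f\geq f^{1/m}\beta^n$ holds distributionally (for $f>0$ smooth), convolution gives $L_\alpha\f_\epsilon\geq (f^{1/m})_\epsilon\,\beta^n$ pointwise, and now Lemma~\ref{lem: Gav} applies \emph{pointwise} to the smooth $\f_\epsilon$ to yield $H_m(\f_\epsilon)\geq ((f^{1/m})_\epsilon)^m\beta^n$; continuity of $H_m$ along decreasing sequences then gives the potential inequality for $\f$. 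The cases $f>0$ continuous and $f\geq 0$ are then reduced to this one (approximate from below by smooth positive functions, respectively add $\epsilon|z|^2$ to $\f$).

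Your part (i), however, has a genuine gap. You assert that the linear slice
\[
dd^c\f\wedge\alpha_1\wedge\cdots\wedge\alpha_{m-1}\wedge\beta^{n-m}\ \geq\ f^{1/m}\beta^n
\]
holds in the sense of distributions as a consequence of the \emph{potential} inequality $H_m(\f)\geq f\beta^n$. This is a G{\aa}rding-type inequality at the level of Bedford--Taylor measures (a ``mixed Hessian inequality''), and it is neither obvious nor proved anywhere in the paper; for non-smooth $\f$ you cannot read it off from Lemma~\ref{lem: Gav}, which is a pointwise statement for forms. Without it your mollification scheme never gets started, since you have no lower bound on $H_m(\f_\epsilon)$. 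The paper's proof of (i) is entirely different and avoids this issue: it argues by contradiction. If some $\Cc^2$ test function $q$ touches $\f$ from above at $x_0$ with $H_m(q)(x_0)<f(x_0)\beta^n$, then $q_\epsilon:=q+\epsilon|z-x_0|^2$ satisfies $H_m(q_\epsilon)<f\beta^n$ on a small ball $B$, is $m$-subharmonic there, and $q_\epsilon-\delta\geq\f$ on $\partial B$ while $q_\epsilon(x_0)-\delta<\f(x_0)$; this contradicts the potential comparison principle. This is both shorter and uses only tools already available in the paper.
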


\begin{proof}
We follow  \cite{EGZ11}. 

\noindent {\bf Proof of (i):} Let $x_0\in \Omega$ and assume that $q$ is a $\Cc^2$ functions which touches $\f$ from above at $x_0.$ Suppose that $H_m(q(x_0))<f(x_0)\beta^n.$ There exists $\epsilon>0$ such that $H_m(q_{\epsilon})<f\beta^n$ in a neighborhood of $x_0$ since $f$ is continuous, here $q_{\epsilon}=q+\epsilon \vert z-x_0\vert^2.$ It follows from the proof of Lemma \ref{lem: viscosity sense and usual sense} that $q_{\epsilon}$ is $m$-subharmonic in a neighborhood of $x_0$, say $B.$ Now, for $\delta>0$ small enough, we have $q_{\epsilon}-\delta \geq \f$ on $\partial B$ but it fails at $x_0$ which contradicts the potential comparison principle (see Theorem 1.14 and Corollary 1.15 in \cite{Cuo12}). 

\noindent {\bf Proof of (ii):} We proceed steps by steps.

\noindent {\bf Step 1: Assume that $0<f$ is  smooth.} Let $x_0\in \Omega$ and assume that $q$ is a $\Cc^2$ functions which touches $\f$ from above at $x_0.$ Fix $\alpha_1,...,\alpha_{m-1}\in U_m.$

We can find $h\in \Cc^2(\{x_0\})$ such that $L_{\alpha} h=f^{1/m}\beta^n.$ As in the proof of Lemma \ref{lem: viscosity sense and usual sense}, we can prove that $\f-h$ is $L_{\alpha}$-subharmonic, which gives $L_{\alpha} \f \geq L_{\alpha} h=f^{1/m}\beta^n$ in the potential sense.   

Consider the standard regularization $\f_{\epsilon}$ of $\f$ by convolution with a smoothing kernel. Then
$$
L_{\alpha} \f_{\epsilon} \geq (f^{1/m})_{\epsilon}\beta^n,
$$
in the potential sense and hence in the usual sense. Now, use Lemma \ref{lem: Gav}, we obtain 
$$
H_m(\f_{\epsilon})\geq (f^{1/m})_{\epsilon}^m\beta^n.
$$
Letting $\epsilon \to 0$ and noting that the Hessian operator is continuous under decreasing sequence, we get
$$
H_m(\f)\geq f\beta^n.
$$

\noindent {\bf Step 2: Assume that $0<f$ is only continuous.} Note that
$$
f=\sup \{h\in \Cc^{\infty}(\Omega),\ 0<h\leq f\}.
$$
Now, if $H_m(\f)\geq f\beta^n$ in the viscosity sense then we also have $H_m(\f)\geq h\beta^n$ in the viscosity sense provided that $f\geq h$. Thus, by Step 1, 
$$
H_m(\f)\geq h\beta^n,
$$
for every $0<h\leq f\in \Cc^{\infty}(\Omega).$ This yields 
$$
H_m(\f)\geq f\beta^n
$$
in the viscosity sense.
\medskip

\noindent {\bf Step 3:  $0\leq f$ is merely continuous.} We consider $\f_{\epsilon}=\f+\epsilon \vert z\vert^2.$ Then 
$$
H_m(\f_{\epsilon})\geq (f+\epsilon^m)\beta^n
$$
in the viscosity sense. By Step 2 we have 
$$
H_m(\f_{\epsilon})\geq (f+\epsilon^m)\beta^n
$$
in the potential sense and the result follows by letting $\epsilon$ go to $0.$
\end{proof}

\begin{theorem}\label{thm: viscosity vs potential general case}
Let $F: \Omega\times \R\to \R^+$ be a continuous function which is non-decreasing in the second variable. Let $\f$ be a bounded u.s.c. function in $\Omega.$ Then the inequality 
\begin{equation}\label{eq: viscosity vs potential 2}
H_m(\f)\geq F(x,\f)\beta^n
\end{equation}
holds in the viscosity sense if and only if $\f$ is $m$-subharmonic in $\Omega$ and (\ref{eq: viscosity vs potential 2}) holds in the potential sense.
\end{theorem}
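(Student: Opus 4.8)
My plan is to prove both implications by reducing to Proposition \ref{prop: viscosity subsolution vs potential subsolution continuous rhs} (the case where the right-hand side does not depend on the solution); the only genuinely new point is that $x\mapsto F(x,\varphi(x))$ need not be continuous.

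\textbf{Potential $\Rightarrow$ viscosity.} Assume $\varphi$ is $m$-subharmonic and $H_m(\varphi)\ge F(x,\varphi)\beta^n$ in the potential sense. Fix $x_0\in\Omega$ and $q\in\Cc^2$ touching $\varphi$ from above at $x_0$, and suppose towards a contradiction that $H_m(q)(x_0)<F(x_0,q(x_0))\beta^n$. As in the proof of Lemma \ref{lem: viscosity sense and usual sense}, $q+\epsilon|z-x_0|^2$ also touches $\varphi$ from above, so $dd^cq(x_0)\in\Gamma_m$; moreover, from the expansion of $S_k(\lambda+t\mathbf 1)$ in powers of $t$ one checks that $\lambda\in\Gamma_m$ and $t>0$ imply $\lambda+t\mathbf 1$ lies in the interior of $\Gamma_m$. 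Hence, writing $q_\epsilon=q+\epsilon|z-x_0|^2$, continuity of $F$ and of $x\mapsto H_m(q_\epsilon)(x)$ gives, for suitable small $\epsilon,\delta>0$ (with $\delta$ small relative to $\epsilon r^2$) and a small ball $B=B(x_0,r)$: the function $q_\epsilon-\delta$ is smooth and $m$-subharmonic on $\bar B$, it satisfies $H_m(q_\epsilon-\delta)\le F(x,q_\epsilon(x)-\delta)\beta^n-c_0\beta^n$ on $\bar B$ for some $c_0>0$, and $q_\epsilon-\delta\ge\varphi$ on $\partial B$ while $q_\epsilon(x_0)-\delta<\varphi(x_0)$. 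Applying the Bedford--Taylor comparison inequality for Hessian measures of bounded $m$-subharmonic functions to $\varphi$ and $q_\epsilon-\delta$ on the nonempty open set $\{q_\epsilon-\delta<\varphi\}\Subset B$, together with $H_m(\varphi)\ge F(x,\varphi)\beta^n\ge F(x,q_\epsilon-\delta)\beta^n$ there (monotonicity of $F$, since $\varphi>q_\epsilon-\delta$ on that set), one obtains $c_0\,|\{q_\epsilon-\delta<\varphi\}|\le 0$, a contradiction. This is precisely the argument of Proposition \ref{prop: viscosity subsolution vs potential subsolution continuous rhs}(i) with $F$ frozen along the subsolution.

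\textbf{Viscosity $\Rightarrow$ potential.} Assume $H_m(\varphi)\ge F(x,\varphi)\beta^n$ in the viscosity sense. Since $F\ge 0$, for any $q$ touching $\varphi$ from above at $x_0$ we have $H_m(q)(x_0)\ge F(x_0,q(x_0))\beta^n\ge 0$, so $\varphi$ is a viscosity subsolution of $-(dd^cu)^m\wedge\beta^{n-m}=0$; by Lemma \ref{lem: viscosity sense and usual sense}, $\varphi$ is $m$-subharmonic. To upgrade the inequality to the potential sense I regularize by sup-convolution: for $\delta>0$ set $\varphi^\delta(x)=\sup_y\bigl(\varphi(y)-\tfrac1{2\delta}|x-y|^2\bigr)$, defined on a slightly shrunk domain. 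Standard viscosity theory gives: $\varphi^\delta$ is locally Lipschitz; $\varphi^\delta\downarrow\varphi$ pointwise as $\delta\downarrow 0$ (using that $\varphi$ is u.s.c.\ and bounded); and $\varphi^\delta$ is a viscosity subsolution of $H_m(u)=F_\delta(x,u)\beta^n$, where $F_\delta(x,t):=\inf_{|y-x|\le C\sqrt\delta}F(y,t)$ is continuous, nonnegative, and converges to $F$ locally uniformly (if $q$ touches $\varphi^\delta$ from above at $x_0$, a translate of $q$ touches $\varphi$ from above at some $y_0$ with $|y_0-x_0|=O(\sqrt\delta)$ and $\varphi(y_0)\ge\varphi^\delta(x_0)$, the second derivatives matching at the contact point; monotonicity of $F$ then yields the stated bound). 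Since $F_\delta\ge 0$, $\varphi^\delta$ is again $m$-subharmonic. Now the key point: as $\varphi^\delta$ is \emph{continuous}, the function $g_\delta(x):=F_\delta(x,\varphi^\delta(x))$ is continuous and nonnegative, and $\varphi^\delta$ is a viscosity subsolution of $H_m(u)=g_\delta(x)\beta^n$ (for $q$ touching $\varphi^\delta$ from above at $x_0$, $H_m(q)(x_0)\ge F_\delta(x_0,q(x_0))\beta^n=F_\delta(x_0,\varphi^\delta(x_0))\beta^n=g_\delta(x_0)\beta^n$). By Proposition \ref{prop: viscosity subsolution vs potential subsolution continuous rhs}(ii), $H_m(\varphi^\delta)\ge g_\delta\beta^n$ in the potential sense. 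Finally let $\delta\downarrow 0$: the $\varphi^\delta$ are uniformly bounded $m$-subharmonic functions decreasing to $\varphi$, so $H_m(\varphi^\delta)\to H_m(\varphi)$ weakly, while $g_\delta(x)=F_\delta(x,\varphi^\delta(x))\to F(x,\varphi(x))$ pointwise and boundedly on compact sets; hence $\int\chi\,dH_m(\varphi^\delta)\ge\int\chi\,g_\delta\beta^n$ passes to the limit for every $0\le\chi\in\Cc_c(\Omega)$, giving $H_m(\varphi)\ge F(x,\varphi)\beta^n$ in the potential sense.

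\textbf{Main obstacle.} The only real difficulty is the $t$-dependence of $F$: it prevents a direct appeal to Proposition \ref{prop: viscosity subsolution vs potential subsolution continuous rhs}, because the frozen right-hand side $x\mapsto F(x,\varphi(x))$ is merely upper semicontinuous (and an $m$-subharmonic $\varphi$ may be discontinuous on a set of positive measure, so this cannot be improved). In the first direction this is dealt with by the comparison argument above, which only uses the monotonicity of $F$ and the comparison inequality for $m$-subharmonic functions; in the second direction it is dealt with by the sup-convolution, whose technical content is the classical lemma that the sup-convolution of a viscosity subsolution is a viscosity subsolution of a controllably perturbed equation, together with the stability of the Hessian operator along decreasing sequences. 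Everything else — the $\Gamma_m$-calculus, the comparison inequality, and Proposition \ref{prop: viscosity subsolution vs potential subsolution continuous rhs} — is already available.
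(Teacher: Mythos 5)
Your proof is correct. In the viscosity $\Rightarrow$ potential direction you follow essentially the paper's route (sup-convolution, freezing the now-continuous right-hand side so that Proposition \ref{prop: viscosity subsolution vs potential subsolution continuous rhs}(ii) applies, then letting $\delta\downarrow 0$), though you spell out the freezing via $g_\delta(x)=F_\delta(x,\varphi^\delta(x))$ more explicitly than the paper does. In the potential $\Rightarrow$ viscosity direction you diverge from the paper: the paper again passes through sup-convolution, citing \cite{EGZ11} for the fact that $H_m(\varphi^\delta)\geq F_\delta(x,\varphi^\delta)\beta^n$ still holds in the potential sense, then invokes Proposition \ref{prop: viscosity subsolution vs potential subsolution continuous rhs} for the continuous $\varphi^\delta$ and lets $\delta\downarrow 0$; you instead adapt the direct comparison argument of Proposition \ref{prop: viscosity subsolution vs potential subsolution continuous rhs}(i) by exploiting the monotonicity of $F$ in $t$, so that the frozen, continuous right-hand side $F(\cdot,q_\epsilon-\delta)$ separates the classical strict subsolution $q_\epsilon-\delta$ from $\varphi$ and the Bedford--Taylor comparison inequality for Hessian measures forces a contradiction. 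This is cleaner, as it avoids the somewhat delicate step of propagating a potential inequality through the sup-convolution. One point you gloss over and should make explicit: the set $\{q_\epsilon-\delta<\varphi\}$ is not obviously open since $\varphi$ is only u.s.c.; it nevertheless has positive Lebesgue measure near $x_0$ because $\varphi$ is subharmonic and therefore satisfies the sub-mean value inequality, which is all that the $c_0\,\vert\{q_\epsilon-\delta<\varphi\}\vert\leq 0$ contradiction requires.
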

\begin{proof}
Let us prove the first implication. Assume that (\ref{eq: viscosity vs potential 2}) holds in the viscosity sense.
Consider the sup-convolution of $\f$:
\begin{equation}\label{eq: sup-convolution local}
\f^{\delta}(x):=\sup\{\f(y)-\frac{1}{\delta^2}\vert x-y\vert^2 \ / \  y\in \Omega\}, \ x\in \Omega_{\delta},
\end{equation}
where $\Omega_{\delta}:=\{x\in \Omega \ /\  d(x,\pO)>A\delta\},$ and the positive constant $A$ is chosen so that $A^2>\text{osc}_{\Omega}\f.$ Then $\f^{\delta}\downarrow \f$ and  as in \cite{Ish89} (see also \cite{EGZ11}) it can be shown  that
\begin{equation}\label{eq: viscosity vs potential 3}
H_m(\f^{\delta})\geq F_{\delta}(x,\f^{\delta})\beta^n, \ \text{in} \ \Omega_{\delta},
\end{equation}
in the viscosity sense, where $F_{\delta}(x,t)=\inf_{\vert y-x\vert\leq A\delta} F(y,t).$

It follows from Proposition \ref{prop: viscosity subsolution vs potential subsolution continuous rhs} that (\ref{eq: viscosity vs potential 3}) holds in the potential sense and the result follows by letting $\delta$ go to $0.$ 

Let us prove the other implication. Suppose that $\f$ satisfies (\ref{eq: viscosity vs potential 2}) in the potential sense.  As in \cite{EGZ11} it can be shown  that 
\begin{equation}\label{eq: viscosity vs potential 4}
H_m(\f^{\delta})\geq F_{\delta}(x,\f^{\delta})\beta^n,
\end{equation}
in the potential sense. Now,  applying Proposition
 \ref{prop: viscosity subsolution vs potential subsolution
  continuous rhs} to $\f^{\delta}$ we see that 
  (\ref{eq: viscosity vs potential 4}) holds in the 
  viscosity sense. It suffices to let $\delta\to 0.$
\end{proof}

%%%%%%%%%%%%%%%%%%%%%%%%%%%%%%%%%%%%%%%%%%%%%%%%%%%%%
\section{Local comparison principle}
In this section we follow \cite{YW10} (see also [CC95]) to prove a viscosity comparison principle for equation (\ref{eq: heq 1}). 
\begin{definition}\label{def: semi convex}
A function $u:\Omega \rightarrow \R$ is called semiconcave (resp. semiconvex) if there exists $K>0$ (resp. $K<0$) such that  for every $z_0\in \Omega$ there exists a quadratic polynomial $P=K\vert z\vert^2+l$, where  $l$ is an affine function, which touches $u$ from above (resp. below) at $z_0.$
\end{definition}

\begin{definition}\label{def: T2}
A function $u:\Omega\rightarrow \R$ is called punctually second order differentiable  at $z_0\in \Omega$ if there exists a quadratic polynomial $q$ such that 
$$
u(z)=q(z)+o(\vert z-z_0\vert^2) \ \text{as} \ z \to z_0.
$$
Note that such a $q$ is unique if it exists. We thus define $dd^c u (z_0), D^2 u(z_0)$ to be $dd^c q(z_0), D^2q(z_0).$
\end{definition}
The following result is  a theorem of Alexandroff-Buselman-Feller (see \cite[Theorem 1, Section 6.4]{EG92}, or \cite[Theorem 1, Section 1.2]{Kr87}, or \cite[Appendix 2]{Kr87}). 

\begin{theorem}\label{thm: ABP thm}
Every continuous semiconvex (or semiconcave) function  is punctually second order  differentiable almost everywhere.
\end{theorem}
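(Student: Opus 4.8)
The plan is to reduce to Alexandrov's classical theorem — that a convex function on Euclidean space admits a second order Taylor expansion almost everywhere — and to recall how that is proved.

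\emph{Reduction to the convex case.} By Definition \ref{def: semi convex}, if $u$ is semiconvex then $w:=u+C|z|^2$ is convex on $\Omega$ for a suitable $C>0$, and if $u$ is semiconcave then $-u$ is semiconvex. Since being punctually second order differentiable at a point is preserved both by adding a quadratic polynomial and by multiplication by $-1$, it suffices to prove the statement for a convex function $w$ on a ball $B$, where we regard $\Omega\subset\C^n$ as an open subset of $\R^{2n}$.

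\emph{Measure-theoretic set-up.} A convex $w$ is locally Lipschitz, so by Rademacher's theorem $Dw$ exists a.e.\ and is locally bounded; each $\partial_i w$ lies in $BV_{loc}(B)$, and the matrix-valued Radon measure $D^2 w$ is symmetric and nonnegative semidefinite (this is exactly convexity). Write the Lebesgue decomposition $D^2 w = A(x)\,dx + (D^2 w)_s$ with $A(x)\ge 0$. Call $x_0$ \emph{good} if it is a Lebesgue point of $Dw$ and of $A$, if $|(D^2 w)_s|(B_r(x_0)) = o(r^{2n})$, and if $Dw$ is approximately differentiable at $x_0$ with approximate differential $A(x_0)$. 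By the Besicovitch differentiation theorem and the a.e.\ approximate differentiability of $BV$ maps, a.e.\ point of $B$ is good; at a good point $w$ is in particular classically differentiable.

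\emph{Upgrading to a pointwise expansion.} This is the heart of the matter and the step I expect to be the main obstacle. Fix a good $x_0$ and, after subtracting an affine function, assume $x_0 = 0$, $w(0) = 0$, $Dw(0) = 0$, and set $A := A(0)$. The key claim is the \emph{pointwise} estimate $Dw(y) = Ay + o(|y|)$ as $y\to 0$; this is false for a general Lipschitz function, but for convex $w$ it follows from the monotonicity of the subdifferential $\partial w$ together with a covering/maximal-function estimate bounding the oscillation of $Dw$ on $B_\rho(y)$ by $\rho^{1-2n}|D^2 w|(B_{2\rho}(y))$, which is $o(\rho)$ by the goodness of $x_0$. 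Granting this, one obtains $w(x) = \tfrac12\langle Ax,x\rangle + o(|x|^2)$ by integrating $Dw$ along the segment $[0,x]$, on which $w$ is absolutely continuous, for the upper bound, and by averaging the subgradient inequality $w(x)\ge w(y)+\langle p, x-y\rangle$ (with $p\in\partial w(y)$, $y\in B_{r/2}(x)$) against $A\ge 0$ for the matching lower bound. Hence $w$ is punctually second order differentiable at $x_0$ with Hessian $A$, proving the theorem; alternatively one may simply invoke \cite[Theorem 1, Section 6.4]{EG92} or \cite{Kr87}.
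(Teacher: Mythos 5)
The paper gives no proof of Theorem \ref{thm: ABP thm}: it records the statement as the classical Aleksandrov--Busemann--Feller theorem and simply refers to \cite[Theorem 1, Section 6.4]{EG92} and to \cite{Kr87}. Your reduction to the convex case is exactly the bridge from the semiconvex/semiconcave statement of the theorem to the convex statement proved in those references, and it is correct: by Definition \ref{def: semi convex}, for a semiconvex $u$ the constant $K<0$ is uniform, so $w:=u+(-K)|z|^2$ has an affine minorant touching it at every point and is therefore convex on each ball, and punctual second-order differentiability is unaffected by adding a quadratic polynomial or by replacing $u$ with $-u$. Your subsequent sketch --- Lebesgue decomposition of the matrix measure $D^2w$, a.e.\ approximate differentiability of the $BV$ gradient $Dw$, then upgrading to a genuine pointwise second-order expansion via monotonicity of $\partial w$ --- is precisely the Evans--Gariepy argument that the paper cites. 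One small caution on the \enquote{upgrading} step as you phrase it: the pointwise oscillation bound you invoke rests on the monotonicity of $Dw$, so it applies to the convex $w$ and not directly to $v=w-\tfrac12\langle A\cdot,\cdot\rangle$, which is the function whose gradient you need to show is $o(|y|)$; the cited references navigate this carefully, using the measure bound on $|D^2 v|(B_\rho)$ together with the monotonicity of $Dw$ rather than of $Dv$. Since you also end by invoking the same references, this is a looseness of exposition rather than a gap, and your proposal is correct and consistent with the paper, which cites rather than reproduces the argument.
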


\begin{theorem}[Local comparison principle]\label{thm: viscosity comparison principle}
Let $F$ be a continuous function which is non-decreasing in the second variable. Let $u$ be a bounded viscosity subsolution and $v$ be a bounded viscosity  supersolution of 
$$
-H_m(\f)+F(x,\f)\beta^n=0.
$$ 
If $u\leq v$ on $\pO$  then $u\leq v$ on $\Omega.$
\end{theorem}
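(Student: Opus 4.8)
The plan is to run the standard Jensen–Ishii doubling-of-variables argument from viscosity theory, adapted to the complex Hessian operator. Suppose for contradiction that $\sup_{\Omega}(u-v)=:2\eta>0$. Replacing $v$ by $v+\eta$ we may pretend the maximum is large while the boundary gap stays negative; more precisely, since $u\le v$ on $\pO$ and $u,v$ are bounded, the set where $u-v\ge\eta$ is a compact subset of $\Omega$. For $\varepsilon>0$ consider on $\Omega\times\Omega$ the penalized function
\[
\Phi_\varepsilon(x,y)=u(x)-v(y)-\frac{1}{2\varepsilon}|x-y|^2,
\]
and let $(x_\varepsilon,y_\varepsilon)$ be a maximum point. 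A routine lemma (see \cite{CIL92}) gives $|x_\varepsilon-y_\varepsilon|^2/\varepsilon\to 0$, both points converging to an interior maximum point $\hat x$ of $u-v$, and for $\varepsilon$ small $x_\varepsilon,y_\varepsilon$ stay in a fixed compact $K\Subset\Omega$ because the boundary inequality is strict there.

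Next I would invoke the Ishii lemma (the theorem on sums) to produce Hermitian matrices $X,Y$ with $X\le Y$ (in fact $\binom{X\ 0}{0\ -Y}\le \frac{3}{\varepsilon}\binom{I\ -I}{-I\ I}$, which forces $X\le Y$ and bounds $\|X\|,\|Y\|$), such that $X$ is (the real Hessian part encoding) an upper test jet for $u$ at $x_\varepsilon$ and $Y$ a lower test jet for $v$ at $y_\varepsilon$. Concretely, one uses the sup-convolution/inf-convolution regularizations of $u$ and $v$, which are semiconvex/semiconcave and hence by Theorem~\ref{thm: ABP thm} twice differentiable a.e.; at such points one has genuine $C^2$ test functions touching from the appropriate side. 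From the supersolution property of $v$ one gets that if the complex Hessian $Y^{\mathbb C}$ is $m$-positive then $S_m(Y^{\mathbb C})\le F(y_\varepsilon,v(y_\varepsilon))$; from the subsolution property of $u$ one gets that $X^{\mathbb C}$ is $m$-positive (arguing as in Lemma~\ref{lem: viscosity sense and usual sense}, using that adding $\delta|z|^2$ keeps it a test function) and $S_m(X^{\mathbb C})\ge F(x_\varepsilon,u(x_\varepsilon))$. Since the complex Hessian is a linear function of the real Hessian, $X\le Y$ gives $X^{\mathbb C}\le Y^{\mathbb C}$ as Hermitian matrices, so $Y^{\mathbb C}$ is $m$-positive too (the cone $\widetilde\Gamma_m$ is "upward" closed) and, because $S_m^{1/m}$ is monotone on $\widetilde\Gamma_m$, $S_m(X^{\mathbb C})\le S_m(Y^{\mathbb C})$. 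Chaining the inequalities,
\[
F(x_\varepsilon,u(x_\varepsilon))\le S_m(X^{\mathbb C})\le S_m(Y^{\mathbb C})\le F(y_\varepsilon,v(y_\varepsilon)).
\]
Letting $\varepsilon\to 0$, continuity of $F$, $x_\varepsilon,y_\varepsilon\to\hat x$, and $u(\hat x)-v(\hat x)=2\eta>0$ together with monotonicity of $F$ in $t$ give $F(\hat x,v(\hat x))\le F(x_\varepsilon,u(x_\varepsilon))+o(1)$ while $F(y_\varepsilon,v(y_\varepsilon))\to F(\hat x,v(\hat x))$, so the chain collapses to $F(\hat x,u(\hat x))\le F(\hat x,v(\hat x))$ with $u(\hat x)>v(\hat x)$ — which is not yet a contradiction since $F$ need only be non-decreasing, not strictly increasing. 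To get a genuine contradiction I would first perturb: replace $u$ by $u_\theta:=(1-\theta)u + \theta\,\underline w$ or, more simply, note one may add to $u$ a small $m$-subharmonic bump, or subtract a constant from $v$; the cleanest route (as in \cite{YW10}) is to use $v_\theta := v + \theta\rho$ with $\rho$ a smooth strictly $m$-subharmonic exhaustion-type function so that $v_\theta$ is a \emph{strict} supersolution, derive $u\le v_\theta$ for all $\theta>0$ by the above, and let $\theta\to0$.

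The main obstacle is the Ishii-lemma step combined with the passage from a real-Hessian inequality $X\le Y$ to the complex-Hessian comparison and the handling of the $m$-positivity cone: one must check that the degenerate-elliptic operator $A\mapsto [S_m(A^{\mathbb C})]_+$ is \emph{proper} (monotone in $A$ and in the "zeroth-order" variable) on the relevant domain, that subsolutions automatically have $m$-positive test Hessians so that the $[\cdot]_+$ truncation in Definition~\ref{def: viscosity supersolution} is harmless on the side where it matters, and that the norm bounds on $X,Y$ coming from the theorem on sums are enough to keep everything in a compact set where $F$ is uniformly continuous. The non-strict monotonicity of $F$ forces the extra perturbation argument, and making that perturbation $m$-subharmonic (rather than merely plurisubharmonic) is exactly where the elementary convexity facts about $\Gamma_m$ and $S_m^{1/m}$ recalled in Section~2, and Lemma~\ref{lem: Gav}, get used.
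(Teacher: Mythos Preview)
Your approach via doubling of variables and the Ishii--Jensen theorem on sums is genuinely different from the paper's. The paper does not double variables at all: it takes the sup- and inf-convolutions $u^{\epsilon},v_{\epsilon}$, sets $w_{\epsilon}=v_{\epsilon}-u^{\epsilon}$, and applies the Alexandroff--Bakelman--Pucci estimate to the convex envelope $G_{\epsilon}$ of $\min(w_{\epsilon},0)$. ABP furnishes a point $x_{\epsilon}$ (in the full-measure set where everything is twice punctually differentiable) at which $\det_{\mathbb R} D^2 G_{\epsilon}(x_{\epsilon})\ge\delta>0$ with $\delta$ depending only on $a=\sup(u-v)$, $n$, and $\mathrm{diam}(\Omega)$. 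Via G{\aa}rding this forces $(dd^c G_{\epsilon})^m\wedge\beta^{n-m}\ge\delta_1\beta^n$ at $x_{\epsilon}$ with $\delta_1$ independent of $\epsilon$. Since $G_{\epsilon}+u^{\epsilon}$ touches $v_{\epsilon}$ from below at $x_{\epsilon}$, one obtains $\delta_2+F_{\epsilon}(x_{\epsilon},u^{\epsilon}(x_{\epsilon}))\le F^{\epsilon}(x_{\epsilon},u^{\epsilon}(x_{\epsilon}))$ for a fixed $\delta_2>0$, and letting $\epsilon\to0$ gives a contradiction directly. The point is that the convex-envelope step manufactures a \emph{quantitative} gap, so the argument goes through with $F$ merely non-decreasing and no perturbation is needed.

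Your route can also be made to work, but the perturbation you wrote down has the wrong sign. Adding a strictly $m$-subharmonic $\theta\rho$ to the supersolution $v$ pushes test Hessians in the wrong direction: if $q$ touches $v+\theta\rho$ from below then $q-\theta\rho$ touches $v$, and the supersolution inequality controls $S_m(dd^c q-\theta\,dd^c\rho)$, which is \emph{smaller} than $S_m(dd^c q)$ on $\widetilde\Gamma_m$, so you get no strict upper bound on $S_m(dd^c q)$. The correct move is either to perturb the subsolution, setting $u_\theta:=u+\theta(|z|^2-C)$ with $C$ large so that $u_\theta<u\le v$ on $\partial\Omega$, and use superadditivity of $S_m^{1/m}$ to get $S_m^{1/m}(dd^c q)\ge F^{1/m}(x,u_\theta)+\theta c$ for upper test functions; or, equivalently, to \emph{subtract} $\theta|z|^2$ (up to a constant) from $v$. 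With that fix, your chain of inequalities yields $F^{1/m}(\hat x,u_\theta(\hat x))+\theta c\le F^{1/m}(\hat x,v(\hat x))$ at the interior maximum, and since $u_\theta(\hat x)>v(\hat x)$ for $\theta$ small, monotonicity of $F$ gives the contradiction. So the Ishii-lemma strategy is viable, but the paper's ABP/convex-envelope argument is cleaner precisely because it bypasses this perturbation step entirely.
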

\begin{proof}
By considering $u-\epsilon$, $\epsilon>0$ and then letting $\epsilon\to 0$ noting that  $F$ is non-decreasing in the second variable, we can assume that $u<v$ near the boundary of $\Omega.$ Assume by contradiction that there exists $x_0\in \Omega$ such that 
$$
u(x_0)-v(x_0)=a>0.
$$
Let $u^{\epsilon}, v_{\epsilon}$ be the sup-convolution and inf-convolution (which is defined similarly as in (\ref{eq: sup-convolution local})).  They are semiconvex and semiconcave functions respectively. By Dini's Lemma $w_{\epsilon}:=v_{\epsilon}-u_{\epsilon}\geq 0$ near the boundary $\pO$ for $\epsilon>0$ small enough. Thus, we can fix some open  subset $U\Subset \Omega$ such that $w_{\epsilon}\geq 0$ on $\Omega\setminus U.$ 

Fix $\epsilon>0$ small enough. Denote by $E_{\epsilon}$ the set of all points in $U$ where $w_{\epsilon},u^{\epsilon},v_{\epsilon}$ are punctually second order differentiable. Then by Theorem \ref{thm: ABP thm}, the Lebesgue measure of $U\setminus E_{\epsilon}$ is $0.$ Fix some $r>0$ such that $\Omega\subset B_r\subset B_{2r}.$ Define
$$
G_{\epsilon}(x)=\sup\{\f(x) \ /\  \f\ \text{is convex in}\ B_{2r},\ \f\leq \min (w_{\epsilon},0)\ \text{in}\ \Omega\}.
$$
Since $w_{\epsilon}\geq 0$ on $\partial U$ and $w_{\epsilon}(x_0)\leq a<0$, using Alexandroff-Bakelman-Pucci (ABP) estimate (see also \cite[Lemma 4.7]{YW10})  we can find $x_{\epsilon}\in E_{\epsilon}$ such that 

(i) $w_{\epsilon}(x_{\epsilon})=G_{\epsilon}(x_{\epsilon})<0,$

(ii) $G_{\epsilon}$ is punctually second order differentiable at $x_{\epsilon}$ and det$_{\R}(D^2 G_{\epsilon}(x_{\epsilon}))\geq \delta,$ where $\delta>0$ depends only on $a,n$ and $diam(\Omega).$

Since $G_{\epsilon}$ is convex, we also have det$_{\C} (dd^c G_{\epsilon})(x_{\epsilon})\geq \delta^{1/2}.$ It follows from G{\aa}rding's inequality \cite{Ga59} that 
$$
(dd^c G_{\epsilon})^m\wedge \beta^{n-m}(x_{\epsilon})\geq \delta_1\beta^n,
$$
where $\delta_1$ does not depend on $\epsilon.$ On the other hand,   
$$
H_m(u^{\epsilon})(x_{\epsilon})\geq F_{\epsilon}(x_{\epsilon},u^{\epsilon}(x_{\epsilon}))\beta^n,
$$

Moreover $G_{\epsilon}+u^{\epsilon}$ touches $v_{\epsilon}$ from below at $x_{\epsilon}.$ Since $G_{\epsilon}+u^{\epsilon}$ is $m$-subharmonic and punctually second order differentiable  at $x_{\epsilon}$ it follows that 
$$
H_m(G_{\epsilon}+u^{\epsilon})(x_{\epsilon})\leq F^{\epsilon}(x_{\epsilon},v_{\epsilon}(x_{\epsilon}))\beta^n.
$$ 
Since $F$ is non-decreasing in the second variable and since $w_{\epsilon}(x_{\epsilon})<0$, the above inequality  implies that
$$
\delta_2+F_{\epsilon}(x_{\epsilon},u^{\epsilon}(x_{\epsilon}))\leq F^{\epsilon}(x_{\epsilon},u^{\epsilon}(x_{\epsilon})),
$$
where $\delta_2>0$ is another constant which does not depend on $\epsilon.$ 
Letting $\epsilon\to 0$, after a subsequence if necessary we obtain a contradiction.
\end{proof}

\section{Viscosity solutions on homogeneous compact Hermitian manifolds}
In this section we consider viscosity solutions to
\begin{equation}\label{eq: hes hom}
-(\omega+dd^c \f)^m\wedge \omega^{n-m}+F(x,\f)\omega^n=0, 
\end{equation}
where $(X,\omega)$ satisfies (H1), (H2) and (H3).

The notion of viscosity subsolutions and supersolutions are defined similarly as in the local case.  We compare viscosity and potential subsolutions in the two following theorems.
\begin{prop}\label{prop: viscosity vs potential hom}
Assume that  $\omega$ is K\"{a}hler and $\f$ is a continuous function on $X.$ Then $\f$ is $(\omega,m)$-subharmonic iff 
\begin{equation}\label{eq: viscosity vs potential hom}
(\omega+dd^c\f)^m \wedge \omega^{n-m}\geq 0
\end{equation}
in the viscosity sense.
\end{prop}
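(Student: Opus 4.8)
The plan is to reduce the equivalence to the already-proven local comparison between viscosity and potential subsolutions. The statement is local in nature once we pass to a chart: the notion of $(\omega,m)$-subharmonicity is defined chart-by-chart via a local potential $\rho$ of $\omega$, and the inequality $(\omega+dd^c\f)^m\wedge\omega^{n-m}\geq 0$ in the viscosity sense is also tested with $\Cc^2$ test functions, hence is a local condition. So first I would fix a local chart $U\subset X$ with a smooth local potential $\rho$ of $\omega$ (here is where the K\"{a}hler hypothesis enters: $\omega=dd^c\rho$ on $U$), and set $u:=\rho+\f$. Then $\omega+dd^c\f=dd^c u$, so the global inequality becomes $(dd^c u)^m\wedge\omega^{n-m}\geq 0$; but $\omega^{n-m}$ is not $\beta^{n-m}$ for a flat $\beta$, so there is a genuine gap between this and the operator $H_m$ studied locally in Section 3.

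The main obstacle is precisely this discrepancy between the reference form $\omega$ (a variable Hermitian metric) and the flat $\beta$ used in all of Section 3. The cleanest fix is to observe that $(\omega,m)$-positivity of a real $(1,1)$-form $\alpha$ at a point $P$ depends only on the eigenvalues of $\alpha$ with respect to $\omega$ at $P$, i.e. on $\lambda(g^{-1}\alpha)\in\Gamma_m$, exactly as recorded in the Remark after the definition of $(\omega,m)$-positivity. At a fixed point $P$ I can choose holomorphic coordinates in which $g_{j\bar k}(P)=\delta_{jk}$, so that near $P$ the form $\omega$ agrees with $\beta$ to first order; then the viscosity condition $(\omega+dd^c\f)^m\wedge\omega^{n-m}\geq 0$ tested against $q$ touching $\f$ from above at $P$ is equivalent, \emph{at that point}, to $(dd^c u)^m\wedge\beta^{n-m}\geq 0$ where $u=\rho+\f$ in those coordinates. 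Thus being an $(\omega,m)$-subharmonic function is equivalent to being a viscosity subsolution of $-(dd^c u)^m\wedge\beta^{n-m}=0$ in every such coordinate patch. Alternatively, and perhaps more robustly, one can simply repeat verbatim the two arguments of Lemma~\ref{lem: viscosity sense and usual sense} with $\beta$ replaced by the (now variable, but smooth and strictly positive) form $\omega$: the regularization direction uses Proposition~\ref{prop: Littman} (valid for weakly $\omega$-subharmonic functions in a chart) plus Proposition~\ref{prop: properties of m sub functions}(iv) and Lemma~\ref{lem: decreasing sequence of viscosity subsolution}; the converse direction uses, for each tuple of constant-coefficient $(\omega,m)$-positive forms $\alpha_1,\dots,\alpha_{m-1}$ frozen at a point, that $dd^c q\wedge\alpha_1\wedge\cdots\wedge\alpha_{m-1}\wedge\omega^{n-m}\geq 0$ is a constant-coefficient elliptic (Laplace-type) operator, so H\"{o}rmander's Proposition 3.2.10' applies to give $u\in L^1_{loc}$ satisfying the inequality distributionally, and then continuity in the choice of $\alpha_j$'s gives $dd^c u$ is $(\omega,m)$-positive weakly.

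Concretely, I would carry it out as follows. For the forward implication (continuous $(\omega,m)$-subharmonic $\Rightarrow$ viscosity subsolution): work in a chart $U$, write $u=\rho+\f$, invoke Proposition~\ref{prop: Littman} to get a decreasing family $u_h$ of smooth weakly $\omega$-subharmonic functions converging to $u$; smoothness plus weak $(\omega,m)$-subharmonicity gives that each $u_h$ is a classical (hence viscosity) subsolution of $-(dd^c u_h)^m\wedge\omega^{n-m}=0$; since $u_h\downarrow u$ and $u$ is continuous, Lemma~\ref{lem: decreasing sequence of viscosity subsolution} (whose proof only used continuity of the right-hand side $F\equiv 0$ and is insensitive to replacing $\beta$ by $\omega$) yields that $u$, hence $\f$, is a viscosity subsolution. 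For the converse (viscosity subsolution $\Rightarrow$ $(\omega,m)$-subharmonic): at an arbitrary $P\in X$ choose coordinates normalizing $\omega$ at $P$; test with $q+\varepsilon|z-z_0|^2$ to see that $dd^c q(P)$ is $(\omega,m)$-positive, i.e. $\lambda(g(P)^{-1}dd^c q(P))\in\Gamma_m$; then for each constant tuple $\alpha_1,\dots,\alpha_{m-1}$ with $\alpha_j\in U_m$ (adapted to $\omega(P)$) the operator $L_\alpha$ is a positive constant-coefficient Laplacian, so by H\"{o}rmander the distributional inequality $L_\alpha u\geq 0$ holds; letting the $\alpha_j$'s vary and using G\aa{}rding's inequality exactly as in Lemma~\ref{lem: symmetric}'s setup recovers $(\omega+dd^c\f)\wedge\beta_1\wedge\cdots\wedge\beta_{m-1}\wedge\omega^{n-m}\geq 0$ weakly for all smooth $(\omega,m)$-positive $\beta_j$, which together with the weak $\omega$-subharmonicity of $u$ (immediate from $m\geq 1$) is precisely Definition~\ref{def: m sub non smooth}. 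The one point requiring care is that the eigenvalue cone $\Gamma_m$ and the set $U_m$ must be taken relative to the metric $\omega(P)$ rather than the identity, but since $\omega(P)$ is a positive Hermitian matrix this is just a linear change of variables and all the algebraic facts (convexity of $\Gt_m$, concavity of $\St_m^{1/m}$, G\aa{}rding's inequality) are invariant under it.
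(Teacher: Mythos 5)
Your high-level instinct — that the obstacle is the mismatch between the flat reference form $\beta$ used in Section~3 and the variable Hermitian form $\omega$, and that one cannot blindly re-use Lemma~\ref{lem: viscosity sense and usual sense} — is correct, and the paper's proof is indeed self-contained rather than reduced to the local lemma. But both of your proposed fixes have genuine gaps.

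\textbf{Forward direction.} You invoke Proposition~\ref{prop: Littman} to regularize $u=\rho+\f$ and then assert that ``smoothness plus weak $(\omega,m)$-subharmonicity gives that each $u_h$ is a classical subsolution of $-(dd^c u_h)^m\wedge\omega^{n-m}=0$.'' But Littman's regularization only guarantees property~(b) of Proposition~\ref{prop: Littman}, namely $dd^c u_h\wedge\omega^{n-1}\geq 0$; this is the $m=1$ positivity, not $(\omega,m)$-positivity. The kernel $K_h$ is designed to preserve $\omega$-subharmonicity, and there is no reason that the nonlinear cone condition $\lambda(g^{-1}dd^c u)\in\Gamma_m$ survives this particular averaging for $m>1$. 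You need a regularization that is known to preserve $(\omega,m)$-subharmonicity, which is exactly why the paper uses the smoothing from \cite{Chi12} (in the homogeneous setting this is the $K$-averaging $\f_\epsilon(x)=\int_K\Lct_g\f(x)\chi_\epsilon(g)\,dg$, which preserves the cone because $\mathcal{L}_g^*\omega=\omega$). The paper then runs a localization argument: take a maximum point $x_\epsilon$ of $\f_\epsilon-q-\delta\rho$, show $x_\epsilon\to x_0$ via Dini, apply the classical inequality at $x_\epsilon$ to the smooth $\f_\epsilon$, and pass to the limit. Without a cone-preserving regularization this step does not go through.

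\textbf{Converse direction.} You freeze coordinates at $P$ so that $\omega(P)$ is the identity, and then treat $L_\alpha u = dd^c u\wedge\alpha_1\wedge\cdots\wedge\alpha_{m-1}\wedge\omega^{n-m}$ as a ``positive constant-coefficient Laplacian'' and apply H\"{o}rmander's Proposition~3.2.10$'$. But even if the $\alpha_j$ are chosen with constant coefficients, $\omega^{n-m}$ has variable smooth coefficients, so $L_\alpha$ is a variable-coefficient linear elliptic operator, not a constant-coefficient one. Normalizing $\omega$ at the single point $P$ does not make the operator constant in a neighborhood, and the viscosity-to-distribution upgrade needs information in an open set, not at a point. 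This is precisely why the paper supplements H\"{o}rmander's argument with \cite[Corollary 7.20]{HKM93}, which handles the variable-coefficient quasilinear setting. Also note that the paper works with \emph{closed} smooth $(\omega,m)$-positive forms $\alpha_j$ on $X$, not constant-coefficient ones (which do not make global sense on a manifold and whose closedness is what makes the resulting current inequality meaningful).

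In short: the structure you propose (regularize for one direction, reduce to a linear elliptic operator via G\aa{}rding for the other) is the right skeleton and matches the paper, but the two crucial technical inputs — a cone-preserving regularization and a variable-coefficient viscosity-to-distribution lemma — are missing, and the substitutes you offer (Littman's kernel; freezing $\omega$ at a point) do not actually supply them.
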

\begin{proof}
Assume that $\f$ is $(\omega,m)$-subharmonic and let $\f_{\epsilon}$ be the smooth regularizing sequence of $\f$ as in \cite{Chi12}. Then $\f_{\epsilon}$ is $(\omega,m)$-subharmonic in the viscosity sense. 

Fix $x_0\in X$, $\delta>0$ and $q$ a $\Cc^2$ function which touches $\f$ from above at $x_0.$ Let $B$ be a small closed ball where the touching appears and let $x_{\epsilon}$ be a maximum point of $\f_{\epsilon}-q-\delta \rho$ in $B.$ Here $\rho=\vert z-x_0\vert^2.$ Then due to the uniform convergence of $\f_{\epsilon}$ and Dini's Lemma we have $x_{\epsilon}\to x_0$ as $\epsilon\downarrow 0.$ Also, for small $\epsilon>0$, $q+\delta\rho+\f_{\epsilon}(x_{\epsilon})-q(x_{\epsilon})$ touches $\f_{\epsilon}$ from above at $x_{\epsilon}$. This implies that
$$
(\omega+dd^c q+\delta dd^c \rho)^m\wedge\omega^{n-m}\geq 0
$$ 
holds at $x_{\epsilon}$ which, in turn, implies one implication by letting $\epsilon\downarrow 0$ and $\delta\downarrow 0.$

Let us prove the other implication. Assume that $\f$ satisfies (\ref{eq: viscosity vs potential hom}) in the viscosity sense. Fix $\alpha=\alpha_1\wedge...\wedge\alpha_{m-1},$ where $\alpha_i$ are smooth $(\omega,m)$-positive closed (1,1)-forms. By G{\aa}rding's inequality we see that
$$
(\omega+dd^c \f)\wedge \alpha\wedge\omega^{n-m}\geq 0
$$ 
in the viscosity sense. Thanks to \cite[Corollary 7.20]{HKM93} the same arguments as in \cite[page 147]{Hor94} show that the above inequality also holds in the sense of currents. Thus, $\f$ is $(\omega,m)$-subharmonic.  
\end{proof}
\begin{theorem}\label{thm: viscosity vs potential hom}
Assume that $\omega$ is K\"{a}hler, $F$ is continuous on $X\times \R$ and increasing in the second variable, and $\f\in \Cc(X).$ Then $\f$ is $(\omega,m)$-subharmonic and satisfies 
\begin{equation}\label{eq: viscosity vs potential hom 2}
(\omega+dd^c\f)^m\wedge\omega^{n-m}\geq F(x,\f)\omega^n
\end{equation} 
in the potential sense  if and only if  the above inequality holds in the viscosity sense.
\end{theorem}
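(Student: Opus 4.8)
The plan is to reduce the theorem to one in which the right-hand side depends only on the space variable, and then to transcribe the proofs of Proposition~\ref{prop: viscosity subsolution vs potential subsolution continuous rhs} and Proposition~\ref{prop: viscosity vs potential hom} to that setting, with the flat K\"ahler form $\beta$ replaced throughout by $\omega$. Since $\f$ and $F$ are both continuous, $f(x):=F(x,\f(x))$ is a continuous nonnegative function on $X$; and since a $\Cc^2$ test function $q$ touching $\f$ from above at a point $x_0$ has $q(x_0)=\f(x_0)$, hence $F(x_0,q(x_0))=f(x_0)$, the inequality $(\omega+dd^c\f)^m\wedge\omega^{n-m}\geq F(x,\f)\,\omega^n$ in the viscosity sense is exactly the same requirement as $(\omega+dd^c\f)^m\wedge\omega^{n-m}\geq f\,\omega^n$ in the viscosity sense, while in the potential sense the two are literally the same inequality of measures. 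So it suffices to prove: for continuous $\f$ and $0\leq f\in\Cc(X)$, the function $\f$ is $(\omega,m)$-subharmonic and satisfies $(\omega+dd^c\f)^m\wedge\omega^{n-m}\geq f\,\omega^n$ in the potential sense if and only if this inequality holds in the viscosity sense; on the viscosity side the $(\omega,m)$-subharmonicity comes for free, since $F\geq0$ forces $(\omega+dd^cq)^m\wedge\omega^{n-m}(x_0)\geq0$ at every touching point and Proposition~\ref{prop: viscosity vs potential hom} then applies. (The monotonicity hypothesis on $F$ plays no role in this reduction; what makes it work is the continuity of $\f$, in contrast with the local Theorem~\ref{thm: viscosity vs potential general case}, where $\f$ need only be upper semicontinuous and a sup-convolution is therefore required.)

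Both sides of the reduced statement are local, so I would fix a coordinate chart $U$, a local potential $\rho$ of $\omega$ on $U$ (available because $\omega$ is K\"ahler), and set $u:=\rho+\f$, so that everything becomes a statement about the operator $(dd^cu)^m\wedge\omega^{n-m}$ on $U$. For the implication from the potential inequality to the viscosity one I would argue by contradiction as in Proposition~\ref{prop: viscosity subsolution vs potential subsolution continuous rhs}(i): if a $\Cc^2$ function $q$ touched $u$ from above at $x_0$ with $(dd^cq)^m\wedge\omega^{n-m}(x_0)<f(x_0)\,\omega^n$, then, setting $q_\epsilon:=q+\epsilon|z-x_0|^2$, for small $\epsilon>0$ the form $dd^cq_\epsilon$ would be $(\omega,m)$-positive near $x_0$ (since $u$ is $(\omega,m)$-subharmonic) and would satisfy $(dd^cq_\epsilon)^m\wedge\omega^{n-m}<f\,\omega^n$ there, so $q_\epsilon-\delta$ for small $\delta>0$ would contradict the comparison principle for $(dd^c\cdot)^m\wedge\omega^{n-m}$ on a small ball. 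For the converse I would follow the three steps of Proposition~\ref{prop: viscosity subsolution vs potential subsolution continuous rhs}(ii): for smooth $f>0$, freeze the coefficients of $\omega$ at each point, use the G{\aa}rding-type identity of Lemma~\ref{lem: Gav} for the associated constant-coefficient model operators to see that $u$ is subharmonic for each constant-coefficient linearization and hence satisfies the corresponding inequalities in the sense of distributions, regularize $u$ by convolution, recombine via G{\aa}rding's inequality, and let the regularization parameter tend to $0$; then pass to merely continuous $f>0$ by approximating $f$ from below by smooth positive functions; and finally handle $f\geq0$ by replacing $\f$ with $\f+\epsilon\rho$ and letting $\epsilon\to0$. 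The bottom case $f\equiv0$ is exactly Proposition~\ref{prop: viscosity vs potential hom}, which I would invoke as a black box.

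The hard part will be the first step of the converse in this adapted setting: because $\omega$ is not flat, the coefficients of $\omega^{n-m}$ vary, so Lemma~\ref{lem: Gav} and the constant-coefficient subharmonicity theorem of H\"ormander do not apply verbatim. The remedy will be to freeze the metric at each point, run the G{\aa}rding/H\"ormander argument for the resulting constant-coefficient operator, and absorb the frozen-coefficient error into the strict inequality using the continuity of the metric --- the same mechanism already used in the proof of Proposition~\ref{prop: viscosity vs potential hom}, which is exactly where the K\"ahlerness of $\omega$ is needed. A secondary, routine point is a local comparison principle for $(dd^c\cdot)^m\wedge\omega^{n-m}$ over a non-flat K\"ahler background, which extends the flat case of \cite{Cuo12} with no essential change.
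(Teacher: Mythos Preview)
Your reduction to a right-hand side $f(x)=F(x,\f(x))$ depending only on $x$, and your argument for the implication potential $\Rightarrow$ viscosity via the comparison principle, are correct and coincide with the paper's proof.

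The divergence is in the converse direction, and there the plan has a real gap. You propose to work in a chart, freeze the coefficients of $\omega$ at each point, obtain constant-coefficient linear inequalities via G{\aa}rding, regularize $u=\rho+\f$ by \emph{Euclidean convolution}, and then recombine. The difficulty is precisely the regularization step: convolution preserves an inequality $Lu\geq g$ only when $L$ has constant coefficients, whereas the linear operators $dd^c(\cdot)\wedge\alpha\wedge\omega^{n-m}$ that actually arise have variable coefficients through $\omega^{n-m}$ (and through $\alpha$). Your proposed cure --- freeze $\omega$ at $x_0$ and ``absorb the frozen-coefficient error into the strict inequality'' --- is not the mechanism of Proposition~\ref{prop: viscosity vs potential hom}: that proposition passes from viscosity to distributions for a \emph{fixed} variable-coefficient linear operator by invoking \cite{HKM93}, with no freezing and no error term. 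If you do freeze $\omega$ at $x_0$ and mollify, the discrepancy $\omega-\omega(x_0)$ couples to the (a priori unbounded) complex Hessian of $u*\chi_\epsilon$, and a mere strict lower bound on $f$ does not obviously control this. For $m=n$ the issue evaporates because there is no $\omega^{n-m}$ factor; for $m<n$ it is genuine, and your sketch does not explain how to close it.

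The paper sidesteps this entirely by exploiting the homogeneous structure rather than working locally. After obtaining, via G{\aa}rding and the argument of Proposition~\ref{prop: viscosity vs potential hom}, the linear inequality
\[
(\omega+dd^c\f)\wedge\alpha_1\wedge\cdots\wedge\alpha_{m-1}\wedge\omega^{n-m}\geq h_1^{1/m}\cdots h_{m-1}^{1/m}\,\tilde f^{1/m}\,\omega^n
\]
in the sense of currents for all smooth $(\omega,m)$-positive $\alpha_j$, it regularizes $\f$ not by convolution but by the \emph{group averaging} $\f_\epsilon(x)=\int_K \Lct_g\f(x)\,\chi_\epsilon(g)\,dg$ of \cite{Chi12}. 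Because $\omega$ is $K$-invariant, a direct computation shows that this averaging preserves the linear inequality exactly, with $\tilde f^{1/m}$ replaced by $(\tilde f^{1/m})_\epsilon$; one then chooses $\alpha_j=\omega+dd^c\f_\epsilon$ (now smooth and $(\omega,m)$-positive), obtains $(\omega+dd^c\f_\epsilon)^m\wedge\omega^{n-m}\geq((\tilde f^{1/m})_\epsilon)^m\omega^n$, and lets $\epsilon\to0$. The passage from $f>0$ to $f\geq0$ is likewise handled globally, by $\f_t=(1-t)\f+t\psi$ for a smooth strictly $(\omega,m)$-subharmonic $\psi$ on $X$, rather than by your $\f+\epsilon\rho$, which is only defined chart by chart. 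In short, the paper's proof is global and leans essentially on (H1)--(H3); it is not a transcription of the flat-case argument with frozen coefficients.
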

\begin{proof}
Set 
$$
f(x)=F(x,\f(x)), \ x\in X.
$$
\medskip

Assume that $\f$ satisfies (\ref{eq: viscosity vs potential hom 2}) in the potential sense. 
Let $x_0\in X$ and $q\in \Cc^2(U)$ which touches $\f$ from above at $x_0$ in $U$, 
a small neighborhood of $x_0.$ Suppose by contradiction that $$
(\omega+dd^cq)^m\wedge\omega^{n-m}< f\omega^n
$$
holds at $x_0.$ Then for $\epsilon$ small enough we have
$$
(\omega+dd^cq_{\epsilon})^m\wedge\omega^{n-m}< f\omega^n
$$
in a small ball $B$ containing $x_0.$ Here $q_{\epsilon}=q+\epsilon\vert x-x_0\vert^2$ 
defined in a local chart near $x_0.$ Since $q$ touches $\f$ from above at $x_0$ in $B$, 
we can find $\delta>0$ small enough such that $q_{\epsilon}-\delta\geq \f$ on $\partial B$. 
But $q_{\epsilon}(x_0)-\delta< \f(x_0)$ which contradicts the potential comparison principle. 

Now, we prove the other implication. Assume that $\f$ satisfies (\ref{eq: viscosity vs potential hom 2}) in the viscosity sense. Then from Proposition \ref{prop: viscosity vs potential hom} we see that $\f$ is $(\omega,m)$-subharmonic. 

We consider two cases.

\medskip

\noindent {\bf Case 1:  $F$ does not depend on the second variable.}
\medskip
We denote  $f (x) = F (x,0)$ for $x \in X$.

We first treat the case when $f>0$ . Fix $\tilde{f}$ a smooth function such that $0<\tilde{f}\leq f.$ Then $\f$ satisfies 
$$
(\omega+dd^c\f)^m\wedge\omega^{n-m}\geq \tilde{f}\omega^n
$$
in the viscosity sense.

Fix $\alpha=\alpha_1\wedge...\wedge\alpha_{m-1},$ where $\alpha_i$ are smooth
 $(\omega,m)$-positive closed (1,1)-forms and set 
$$
\alpha_j^m\wedge\omega^{n-m}=h_j\omega^n,\ \ j=1...m-1.
$$
From G{\aa}rding's inequality we see that
\begin{equation}\label{eq: Garding 1}
(\omega+dd^c\f)\wedge\alpha\wedge\omega^{n-m}\geq 
h_1^{1/m}...h_{m-1}^{1/m}\tilde{f}^{1/m}\omega^n,
\end{equation}
in the viscosity sense. As in the proof of Proposition \ref{prop: viscosity vs potential hom}
 it also holds in the potential sense. Let $\f_{\epsilon}$ be the smooth regularization of $\f$ 
 constructed in \cite{Chi12}. We claim that 
$$
(\omega+dd^c\f_{\epsilon})\wedge\alpha\wedge\omega^{n-m}\geq h_1^{1/m}...h_{m-1}^{1/m}(\tilde{f}^{1/m})_{\epsilon}\omega^n,
$$
in the usual sense pointwise on $X$.  Indeed, recall the definition of $\f_{\epsilon}$:
$$
\f_{\epsilon}(x)=\int_K \Lct_g \f(x)\chi_{\epsilon}(g)dg,
$$
where by $\mathcal{L}_g$ we denote the left translation by $g$, i.e $\mathcal{L}_g(x)=g.x, \ \forall x\in X.$
We compute
\begin{eqnarray*}
(\omega+dd^c\f_{\epsilon})\wedge\alpha\wedge\omega^{n-m}&=&\int_K \Lct_g\Big((\omega+dd^c \f)\wedge\Lct_{g^{-1}}\alpha\wedge\omega^{n-m}\Big)\chi_{\epsilon}(g)dg\\
(\text{ By }\ (\ref{eq: Garding 1}))\ \ &\geq& \int_K \Lct_g\Big(\Lct_{g^{-1}}(h_1^{1/m}...h_{m-1}^{1/m})\tilde{f}^{1/m}\omega^n\Big)\chi_{\epsilon}(g)dg\\
&=&h_1^{1/m}...h_{m-1}^{1/m}(\tilde{f}^{1/m})_{\epsilon}\omega^n.
\end{eqnarray*}
Thus, the claim is proved. By choosing $\alpha_j=(\omega+dd^c\f_{\epsilon}),\ j=1,...,m-1$ it follows that  
$$
(\omega+dd^c\f_{\epsilon})^m\wedge\omega^{n-m}\geq ((\tilde{f}^{1/m})_{\epsilon})^m\omega^n.
$$
By letting $\epsilon\downarrow 0$ we get 
$$
(\omega+dd^c\f)^m\wedge\omega^{n-m}\geq \tilde{f}\omega^n,
$$
in the potential sense.  Since $\tilde{f}$ was chosen arbitrarily, we deduce that
$$
(\omega+dd^c\f)^m\wedge\omega^{n-m}\geq f\omega^n,
$$
in the viscosity sense.

If $0\leq f$ is continuous we consider $\f_t:=(1-t)\f+t\psi$ where $\psi$ is a smooth strictly $(\omega,m)$-subharmonic function and $0<t<1.$ Then for each fixed $t\in (0,1)$,  $\f_t$ satisfies 
\begin{equation}\label{eq: viscosity potentiel 1}
(\omega+dd^c\f_t)^m\wedge\omega^{n-m}\geq f_t\omega^n,
\end{equation}
in the viscosity sense with  $f_t$ continuous and strictly positive:
$$
f_t=(1-t)^m f +t^m \frac{(\omega+dd^c\psi)^m\wedge\omega^{n-m}}{\omega^n}.
$$
We then can apply what we have done above to infer that $\f_t$ verifies  (\ref{eq: viscosity potentiel 1}) in the potential sense. It suffices now to let $t\downarrow 0.$

\medskip
\noindent{\bf Case 2: $F$ depends on the second variable.}  Since $\f$ is continuous, the function $f: X\rightarrow \R$, $f(x)= F(x,\f(x))$ is continuous. We can apply Case 1 to complete the proof.
\end{proof}

\subsection{Global Comparison Principle}
Let $u, v$ be  bounded  viscosity subsolution and  supersolution of (\ref{eq: hes hom}).  Construct a distance $d$ on $K$ such that $d^2: K\times K\rightarrow \R^+$ is smooth.  Consider the sup-convolution and inf-convolution as follows
\begin{equation}\label{eq: sup-convolution}
u^{\epsilon}(x):=\sup\Big\{u(g.x)-\frac{1}{\epsilon^2}d^2(g,e)\ /\ \ g\in K\Big\},
\end{equation}
and 
\begin{equation}\label{eq: inf-convolution}
v_\epsilon(x):=\inf\Big\{v(g.x)+\frac{1}{\epsilon^2}d^2(g,e)\ /\ \ g\in K\Big\}.
\end{equation}
\begin{lemma}\label{lem: semi convex}
Fix $x_0\in X$ and consider local coordinates $z: \Omega\to B(0,2),$ where $\Omega$ 
is a small open neighborhood of $x_0$ and $B$ is the ball of radius $2$ in $\C^n.$  
Then $u^{\epsilon}, v_{\epsilon}$ read in this local chart as semiconvex and semiconcave
 functions. In particular, they are punctually second order differentiable almost everywhere in $B(0,1).$
\end{lemma}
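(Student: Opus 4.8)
The plan is to reduce the statement to the classical fact that the sup-convolution of a bounded function is semiconvex and the inf-convolution is semiconcave, once everything is transported to a fixed local chart. First I would fix $x_0\in X$, the local coordinates $z\colon\Omega\to B(0,2)$, and shrink $\Omega$ if necessary so that for every $x$ in a slightly smaller neighborhood and every $g$ in a small neighborhood $V$ of $e$ in $K$ the point $g\cdot x$ still lies in $\Omega$; this is possible because the action $K\times X\to X$ is continuous and $K$ acts transitively. The key point is that for $\epsilon$ small the supremum defining $u^{\epsilon}(x)$ (resp.\ the infimum defining $v_{\epsilon}(x)$) is effectively taken over $g\in V$ only: since $u$ is bounded, say $|u|\le M$, any $g$ with $d(g,e)$ bounded below by a fixed constant contributes at most $M-c/\epsilon^2<-M$ for small $\epsilon$, hence cannot realize the sup. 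So on $B(0,1)$ we may write, in the $z$-coordinates,
\begin{equation*}
u^{\epsilon}(z)=\sup_{g\in V}\Big\{u\big(z(g\cdot z^{-1}(z))\big)-\tfrac{1}{\epsilon^2}d^2(g,e)\Big\}.
\end{equation*}

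Next I would exploit that $d^2\colon K\times K\to\R^+$ is smooth by construction, so $g\mapsto d^2(g,e)$ is smooth on the compact closure $\overline{V}$; choosing a chart for $K$ near $e$, the function $\xi\mapsto d^2(\exp\xi,e)$ is $C^2$ with a bounded Hessian, hence there is a constant $C_1>0$ with $d^2(\exp\xi,e)\le C_1|\xi|^2$ and, more importantly, $\xi\mapsto d^2(\exp\xi,e)-C_1|\xi|^2$ concave near $0$ (after possibly enlarging $C_1$). Simultaneously, the map $(\xi,z)\mapsto z\big(\exp\xi\cdot z^{-1}(z)\big)$ is smooth, so for each fixed $\xi$ the inner function $z\mapsto u(z(\exp\xi\cdot z^{-1}(z)))$ differs from $z\mapsto u(z+\xi')$ by a smooth reparametrization with controlled second derivatives. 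The standard argument then shows that each function inside the supremum, as a function of the pair $(z,\xi)$, is a bounded function plus a fixed quadratic $K_0(|z|^2+|\xi|^2)$ times a concave remainder; taking the sup over $\xi$ of a family that is uniformly semiconvex in $z$ with a common constant yields a semiconvex function of $z$ on $B(0,1)$, i.e.\ $u^{\epsilon}(z)+K|z|^2$ is convex for some $K>0$. The same reasoning with inequalities reversed gives that $v_{\epsilon}(z)-K|z|^2$ is concave, so $v_{\epsilon}$ is semiconcave. The \textit{in particular} clause is then immediate from Theorem \ref{thm: ABP thm}.

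I expect the main obstacle to be the bookkeeping in the second paragraph: one must check carefully that the nonlinear change of variables $z\mapsto z(g\cdot z^{-1}(z))$, when composed with $u$, preserves semiconvexity \emph{with a constant uniform in $g\in\overline V$ and in $\epsilon$}. Concretely, if $u(\zeta)+K_1|\zeta|^2$ fails to be convex (which it may, since $u$ is only bounded) one cannot argue directly on $u$; instead one argues on the \emph{family} $F_g(z):=u(\Phi_g(z))-\epsilon^{-2}d^2(g,e)$ and uses that $z\mapsto F_g(z)+K|z|^2$ is convex \emph{not} because each $F_g$ is smooth but because, after the diffeomorphism $\Phi_g$, the sup-convolution structure is inherited: $u^{\epsilon}(z)=\sup_g F_g(z)$ and each $F_g$, viewed through $\Phi_g$, is itself a sup-convolution-type expression whose semiconvexity constant depends only on $\sup_g\|D^2\Phi_g\|_{C^0(B(0,2))}$, $\sup_g\|D\Phi_g\|$, $\|u\|_\infty$, and $\epsilon$. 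Since $\overline V$ is compact and $(\xi,z)\mapsto\Phi_{\exp\xi}(z)$ is smooth, all these quantities are finite, giving a uniform $K$. Once this uniformity is in hand, the supremum (resp.\ infimum) of a uniformly semiconvex (resp.\ semiconcave) family is semiconvex (resp.\ semiconcave) by the elementary fact that a supremum of convex functions is convex, and the lemma follows.
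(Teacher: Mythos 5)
Your reduction to the classical semiconvexity of sup-convolutions is the right instinct, and your first paragraph (localizing the sup to group elements $g$ near $e$) is fine. However, the core step has a genuine gap, and you identify it yourself in the ``main obstacle'' paragraph without actually closing it. Writing $u^{\epsilon}(z)=\sup_{g}F_g(z)$ with $F_g(z)=u(\Phi_g(z))-\epsilon^{-2}d^2(g,e)$, you assert that $z\mapsto F_g(z)+K\vert z\vert^2$ is convex ``because the sup-convolution structure is inherited'' and ``each $F_g$, viewed through $\Phi_g$, is itself a sup-convolution-type expression.'' This is false: $F_g$ is a \emph{single} term, not a supremum; it is the merely bounded u.s.c.\ function $u$ composed with a diffeomorphism, minus a constant. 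Such a composition has no semiconvexity whatsoever — bounds on $\Vert D\Phi_g\Vert$, $\Vert D^2\Phi_g\Vert$, $\Vert u\Vert_\infty$ cannot produce one, since $u$ may oscillate wildly. So the family $\{F_g\}$ is not uniformly semiconvex, and you cannot conclude by ``sup of a uniformly semiconvex family is semiconvex.'' Contrast the flat case $u^{\epsilon}(x)=\sup_y\{u(y)-\epsilon^{-2}\vert x-y\vert^2\}$: there the index of the sup is the \emph{target} $y$, so $u(y)$ is a constant in $x$ and each term is a constant minus a quadratic, hence trivially semiconvex.

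The repair — and it is exactly what the paper does — is to reparametrize the supremum by the target point rather than by the group element. Fix a smooth section $s:\Omega\to K$ with $\pi\circ s=\mathrm{id}$; given $z$ and a target $y$, the group element $\gamma(y,z):=s(y)\,s(z)^{-1}$ (up to the isotropy) satisfies $\gamma(y,z).z=y$ and depends smoothly on $(y,z)$. Then
\[
u^{\epsilon}(z)=\sup_{y}\Bigl\{u(y)-\tfrac{1}{\epsilon^2}\,d^2\bigl(\gamma(y,z),e\bigr)\Bigr\},
\]
and now, for \emph{fixed} $y$, the term inside the brace is $u(y)$ (a constant!) minus a smooth function of $z$ whose Hessian is bounded uniformly in $y$ over compacts, by smoothness of $s$ and $d^2$ and compactness of $K$. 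That family \emph{is} uniformly semiconvex, so the sup is semiconvex. The paper's proof implements this not via a global reparametrization but through a midpoint-convexity estimate at each fixed $x_0$: it takes $y_0=g_0.x_0$ achieving the sup, sets $\theta(h)=g_0\,s(x_0)\,s(x_0+h)^{-1}$ so that $\theta(h).(x_0+h)=y_0$, plugs $\theta(\pm h)$ into the definition of $u^{\epsilon}(x_0\pm h)$ so that $u(y_0)$ cancels in the second difference, and then only $d^2(\theta(h),e)+d^2(\theta(-h),e)-2d^2(\theta(0),e)=O(\vert h\vert^2)$ remains. You should replace your final paragraph by this reparametrization (or by the equivalent midpoint argument); as written, the proof does not go through.
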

\begin{proof}
We only need to prove the result for $u^{\epsilon}$ since for $v_{\epsilon}$ it follows similarly. 
Consider a smooth section $s: \Omega \rightarrow K$ such that
 $\pi\circ s(x)=x, \forall x\in \Omega,$ where $\pi$ is the projection of $K$ onto $X.$
 
 For simplicity we identify a point in $\Omega$ with its image in $B(0,2).$ 
 
 Put 
$$
 \rho(x)=u^{\epsilon}+C \vert x\vert^2,
$$
where $C>0$ is a big constant to be specified later.
 
We claim that for any $x\in B(0,1)$ there exists $\delta>0$ such that
$$
\rho(x+h)+\rho(x-h)\geq 2\rho(x), \ \forall h\in \C^{n}, \vert h\vert\leq \delta.
$$

It is classical that this property implies the convexity of $\rho.$ Let us prove the claim.  
Let $x_0\in B(0,1)$ and $y_0=g_0.x_0$ be such that 
\begin{equation}\label{eq: repair 1}
u^{\epsilon}(x_0)=u(y_0)-\frac{1}{\epsilon^2}d^2(g_0,e).
\end{equation}
By considering $\epsilon>0$ small enough we can assume that $y_0\in B(0,3/2).$ For  $h\in \C^{n}$  small enough such that $x_0+h, x_0-h\in B(0,1)$, set
$$
\theta(h)= g_0.s(x_0).s(x_0+h)^{-1}.
$$
Then it is easy to see that $\theta(h).(x_0+h)=y_0.$ By definition of $u^{\epsilon}$ we thus get
\begin{equation}\label{eq: repair 2}
u^{\epsilon}(x_0+h)\geq u(y_0)-\frac{1}{\epsilon^2}d^2(\theta(h),e),
\end{equation}
and 
\begin{equation}\label{eq: repair 3}
u^{\epsilon}(x_0-h)\geq u(y_0)-\frac{1}{\epsilon^2}d^2(\theta(-h),e).
\end{equation}
From (\ref{eq: repair 1}), (\ref{eq: repair 2}) and (\ref{eq: repair 3}) we obtain
$$
u^{\epsilon}(x_0+h)+u^{\epsilon}(x_0-h)-2u^{\epsilon}(x_0)\geq -
\frac{1}{\epsilon^2} \Big(d^2(\theta(h),e)+d^2(\theta(-h),e)-2d^2(\theta(0),e)\Big).
$$
Since $s$ is smooth and $K$ is compact we can choose $C>0$ big enough (does not depend on $x_0$) such that  
$$
u^{\epsilon}(x_0+h)+u^{\epsilon}(x_0-h)-2u^{\epsilon}(x_0)\geq -2C\vert h\vert ^2,
$$
for $h\in \C^n$ small enough. This proves the  claim. The last statement follows from Alexandroff-Buselman-Feller's theorem (Theorem \ref{thm: ABP thm}).
\end{proof}
\begin{lemma}\label{lem: comparison}
$u^{\epsilon}$ is a viscosity subsolution of 
\begin{equation}\label{eq: comparison 1}
-(\omega+dd^cu)^m\wedge \omega^{n-m}+F_{\epsilon}(x,u)\omega^n=0,
\end{equation}
where 
$$
F_{\epsilon}(x,t):=\inf\Big\{F(g.x,t)\ / \ g\in K, d(g,e)\leq \sqrt{\text{osc}(u)} \epsilon\Big\}.
$$
Similarly,  $v_{\epsilon}$ is a viscosity supersolution of 
\begin{equation}\label{eq: comparison 2}
-(\omega+dd^cu)^m\wedge \omega^{n-m}+F^{\epsilon}(x,u)\omega^n=0,
\end{equation}
where
$$ 
F^{\epsilon}(x,t):=\sup\Big\{F(g.x,t)\ / \ g\in K, d(g,e)\leq \sqrt{\text{osc}(v)}\epsilon\Big\}.
$$ 
\end{lemma}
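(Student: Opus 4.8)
The statement is the standard ``sup-convolution lowers the right-hand side'' lemma, adapted to the group-action setting of a compact homogeneous Hermitian manifold. The plan is to prove the assertion for $u^{\epsilon}$; the case of $v_{\epsilon}$ is entirely analogous with the obvious sign changes. First I would fix $x_0\in X$ and a $\Cc^2$ function $q$ touching $u^{\epsilon}$ from above at $x_0$, work in a local chart as in Lemma \ref{lem: semi convex}, and choose $g_0\in K$ realizing the supremum, so that $u^{\epsilon}(x_0)=u(g_0.x_0)-\epsilon^{-2}d^2(g_0,e)$. Because $A^2>\mathrm{osc}(u)$-type bookkeeping (here the constant is packaged into the radius $\sqrt{\mathrm{osc}(u)}\,\epsilon$), the maximizer $g_0$ satisfies $d(g_0,e)\leq \sqrt{\mathrm{osc}(u)}\,\epsilon$: indeed comparing the value at $g_0$ with the value at $e$ gives $\epsilon^{-2}d^2(g_0,e)\le u(g_0.x_0)-u(x_0)\le \mathrm{osc}(u)$.

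Next, the key translation step: I claim that the function $y\mapsto \tilde q(y):=q(\sigma(y))$, where $\sigma$ is the local diffeomorphism $y\mapsto (g_0.s(x_0).s(y)^{-1})^{-1}.\,y$ (i.e. the inverse of the map sending $x_0+h$ to $g_0.x_0$ used in Lemma \ref{lem: semi convex}), touches $u$ from above at $y_0:=g_0.x_0$, after subtracting the appropriate constant. Concretely, using $u^{\epsilon}(x)\ge u(\theta(x).x)-\epsilon^{-2}d^2(\theta(x),e)$ for the $K$-valued map $\theta(x)=g_0.s(x_0).s(x)^{-1}$ and the equality at $x_0$, one gets that $x\mapsto q(x)+\epsilon^{-2}d^2(\theta(x),e)$ has a local minimum at $x_0$ relative to $u(\theta(x).x)+(\text{const})$; transporting via $x\mapsto \theta(x).x$ produces a $\Cc^2$ function $Q$ touching $u$ from above at $y_0$. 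Since $u$ is a viscosity subsolution of \eqref{eq: hes hom}, we have $(\omega+dd^cQ)^m\wedge\omega^{n-m}\ge F(y_0,Q(y_0))\,\omega^n$ at $y_0$. Now I must pull this inequality back to $x_0$: the relation between $dd^cQ$ at $y_0$ and $dd^cq$ at $x_0$ is governed by the diffeomorphism $x\mapsto \theta(x).x$ composed with the chart; crucially, $\theta(x_0)=g_0$ acts as an element of $K$, and because $\omega$ is $K$-invariant (H3) the form $(\omega+dd^cq)^m\wedge\omega^{n-m}$ at $x_0$ and $(\omega+dd^cQ)^m\wedge\omega^{n-m}$ at $y_0$ agree up to the extra curvature/second-derivative contribution coming from the variation of $\theta(x)$ and $s(x)$ in $x$. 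This extra contribution is smooth and, after the usual trick of adding $\delta|z-x_0|^2$ to $q$ and letting $\delta\to0$, does not affect the inequality; alternatively one absorbs it into the definition of $q$ at the touching point since only the value and the Hessian at the single point $x_0$ matter.

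Putting these together yields $(\omega+dd^cq)^m\wedge\omega^{n-m}\ge F(g_0.x_0,\,q(x_0))\,\omega^n$ at $x_0$, and since $q(x_0)=u^{\epsilon}(x_0)$ and $d(g_0,e)\le\sqrt{\mathrm{osc}(u)}\,\epsilon$, the right-hand side is $\ge F_{\epsilon}(x_0,u^{\epsilon}(x_0))\,\omega^n$ by the very definition of $F_{\epsilon}$. This is exactly the viscosity subsolution inequality for \eqref{eq: comparison 1} at $x_0$, completing the proof. The case of $v_{\epsilon}$ follows by the same argument with sup replaced by inf, $+\epsilon^{-2}d^2$ in place of $-\epsilon^{-2}d^2$, touching from below, and using that $[\,\cdot\,]_+$ only decreases under the relevant comparison.

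\textbf{Main obstacle.} The delicate point is the book-keeping in the second step: one must verify that conjugating the test function $q$ by the $x$-dependent group element $\theta(x)$ genuinely produces a legitimate $\Cc^2$ test function touching $u$ from above at $y_0$, and that the second-order jet transforms correctly so that the $K$-invariance of $\omega$ can be invoked to match the Hessian operators at the two points. The variation of $\theta(x)$ and of the section $s$ in $x$ contributes lower-order-in-$\delta$ but genuinely present terms to the Hessian; handling these cleanly (via the $+\delta|z-x_0|^2$ regularization and passage to the limit, exactly as in the corresponding step of Lemma \ref{lem: semi convex} and of \cite{Ish89, EGZ11}) is the technical heart of the argument. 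Everything else — the estimate $d(g_0,e)\le\sqrt{\mathrm{osc}(u)}\,\epsilon$ and the identification of the right-hand side with $F_{\epsilon}$ — is routine.
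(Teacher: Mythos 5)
Your overall skeleton is in the right spirit (pick the maximizer $g_0$, transport the test function using a group element, invoke $K$-invariance of $\omega$, and bound $d(g_0,e)$ via the sup-convolution identity; your estimate $\epsilon^{-2}d^2(g_0,e)\le \mathrm{osc}(u)$ is exactly the paper's). But the central translation step is both overcomplicated and, as written, incoherent. You propose to transport $q$ by the $x$-dependent group element $\theta(x)=g_0\,s(x_0)\,s(x)^{-1}$ and then invert ``the map sending $x_0+h$ to $g_0.x_0$.'' That map is $x\mapsto\theta(x).x$, and by construction $\theta(x).x\equiv g_0.x_0$ for all $x$ near $x_0$: it is \emph{constant}, so it has no inverse and cannot serve as a change of variables. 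The $\theta$-machinery from Lemma \ref{lem: semi convex} is designed to prove semiconvexity of $u^{\epsilon}$ (where one needs to compare $u^{\epsilon}$ at $x_0\pm h$ to a single value $u(y_0)$); it is not the right tool for building a test function for $u$. Moreover, the ``extra curvature/second-derivative contribution'' you flag as the technical heart is a self-inflicted wound: a correct proof has no such term to control.

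The paper instead uses the \emph{fixed} translation $Q(x):=q(g_0^{-1}.x)+\epsilon^{-2}d^2(g_0,e)$. One checks directly that $Q$ touches $u$ from above at $y_0:=g_0.x_0$: for $y$ near $y_0$,
$$
Q(y)=q(g_0^{-1}.y)+\tfrac{1}{\epsilon^2}d^2(g_0,e)\ \ge\ u^{\epsilon}(g_0^{-1}.y)+\tfrac{1}{\epsilon^2}d^2(g_0,e)\ \ge\ u(y),
$$
using the definition of $u^{\epsilon}$ with the competitor $g=g_0$, with equality at $y=y_0$. Since $g_0$ is a fixed isometry for $\omega$ (by (H3), $\Lct_{g_0}\omega=\omega$), the complex Hessian operator transforms exactly, with no correction terms: $(\omega+dd^cQ)^m\wedge\omega^{n-m}$ at $y_0$ equals $(\omega+dd^cq)^m\wedge\omega^{n-m}$ at $x_0$. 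Applying the subsolution property of $u$ at $y_0$ and then the bound $d(g_0,e)\le\sqrt{\mathrm{osc}(u)}\,\epsilon$ gives $F(g_0.x_0,q(x_0))\ge F_{\epsilon}(x_0,u^{\epsilon}(x_0))$ directly. You should replace your $x$-dependent $\theta$ construction by this constant left translation; once you do, the ``main obstacle'' you describe disappears, and no $\delta|z-x_0|^2$ regularization is needed in this lemma.
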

\begin{proof}
We only need to prove the first assertion since the second one follows similarly. Let $q$ be a function of class $\Cc^2$ in a neighborhood of $x_0\in X$ that touches $u^{\epsilon}$ from above at $x_0.$ Let $g_0\in K$ be such that 
$$
u^{\epsilon}(x_0)=u(g_0.x_0)-\frac{1}{\epsilon^2}d^2(g_0,e).
$$
Consider the function $Q$ defined by 
$$
Q(x):=q(g_0^{-1}x)+\frac{1}{\epsilon^2}d^2(g_0,e).
$$
Then $Q$ touches $u$ from above at $g_0.x_0.$ Since $u$ is  a subsolution of (\ref{eq: hes hom}), we have
$$
(\omega+dd^c Q)^m\wedge \omega^{n-m}\geq F(x,Q)\omega^n, \ \text{at} \ g_0.x_0.
$$
Since $\mathcal{L}^*_{g_0}\omega=\omega$ we get
$$
(\omega+dd^c q)^m\wedge \omega^{n-m}\geq F(g_0.x_0,q(x_0))\omega^n, \ \text{at} \ x_0.
$$
From the definition of $u^{\epsilon}$ we know that $u^{\epsilon}(x_0)=u(g_0.x_0)-\frac{1}{\epsilon^2}d^2(g_0,e)\geq u(x_0).$ 
Thus $d(g_0,e)\leq \epsilon\sqrt{\text{osc}(u)}$ and the result follows.
\end{proof}

Now, we prove a viscosity comparison principle on homogeneous manifolds. The fact that the metric $\omega$ is invariant under group actions allows us to follow the proof of Theorem \ref{thm: viscosity comparison principle} in this global context.
\begin{theorem}\label{thm: viscosity comparison principle hom}
Assume that $u, v$ are bounded viscosity subsolution and supersolution of 
$$
-(\omega+dd^c\f)^m\wedge\omega^{n-m}+ F(x,\f)\omega^n=0,
$$
where $0\leq F(x,t)$ is a continuous function which is increasing in the second variable. Then we have $u\leq v$ on $X.$
\end{theorem}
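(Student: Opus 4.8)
The plan is to argue by contradiction, adapting essentially verbatim the proof of the local comparison principle, Theorem~\ref{thm: viscosity comparison principle}; the one genuinely new point is that, since $X$ has no boundary, the role played there by the boundary inequality $u\le v$ on $\pO$ will be taken over here by the \emph{strict} monotonicity of $F$ in the second variable (which is exactly why that hypothesis is strengthened in the present statement).

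\noindent\textbf{Regularization.} Suppose for contradiction that $a:=\max_X(u-v)>0$; the maximum is attained because $X$ is compact, $u$ is u.s.c.\ and $v$ is l.s.c. Form the sup- and inf-convolutions $u^\epsilon$ and $v_\epsilon$ of \eqref{eq: sup-convolution}--\eqref{eq: inf-convolution}, built from the action of $K$. By Lemma~\ref{lem: semi convex} they are, read in any fixed coordinate chart, semiconvex resp.\ semiconcave, hence punctually second order differentiable almost everywhere by Theorem~\ref{thm: ABP thm}; by Lemma~\ref{lem: comparison}, $u^\epsilon$ is a viscosity subsolution of the equation with right-hand side $F_\epsilon$ and $v_\epsilon$ a viscosity supersolution of the one with right-hand side $F^\epsilon$, where $F_\epsilon\uparrow F$ and $F^\epsilon\downarrow F$ uniformly on compact sets as $\epsilon\to0$. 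Put $w_\epsilon:=v_\epsilon-u^\epsilon$. Since $v_\epsilon\le v$, $u^\epsilon\ge u$ and $w_\epsilon$ increases to $v-u$ as $\epsilon\downarrow0$, one has $\min_X w_\epsilon\uparrow -a$; let $x_\epsilon$ be a global minimum point of $w_\epsilon$ and, after passing to a subsequence, let $x_\epsilon\to x_*$ (necessarily a maximum point of $u-v$).

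\noindent\textbf{The convex-envelope / ABP step.} Fix a chart $z:\Omega\to B(0,2)$ with $x_*\leftrightarrow 0$, so that $x_\epsilon\in B(0,1/2)$ for $\epsilon$ small, and read everything in this chart. Exactly as in Theorem~\ref{thm: viscosity comparison principle}, introduce the largest convex function $G_\epsilon$ on $B(0,2)$ lying below $\min(w_\epsilon,0)$ on $B(0,1)$, and apply the Alexandroff--Bakelman--Pucci estimate on $B(0,1)$. It produces a point $x'_\epsilon\in B(0,1)$ such that $w_\epsilon,u^\epsilon,v_\epsilon$ are punctually second order differentiable at $x'_\epsilon$, that $w_\epsilon(x'_\epsilon)=G_\epsilon(x'_\epsilon)\le -a/4<0$, and that $\det_{\R}D^2G_\epsilon(x'_\epsilon)\ge\delta$ with $\delta>0$ independent of $\epsilon$. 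Since $G_\epsilon$ is convex this gives $\det_{\C}(dd^cG_\epsilon)(x'_\epsilon)\ge\delta^{1/2}$, and G{\aa}rding's inequality upgrades it to $(dd^cG_\epsilon)^m\wedge\omega^{n-m}(x'_\epsilon)\ge\delta_1\,\omega^n$ with $\delta_1>0$ independent of $\epsilon$.

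\noindent\textbf{Conclusion.} At $x'_\epsilon$ the subsolution inequality for $u^\epsilon$ gives $\omega+dd^cu^\epsilon\in\Gt_m$ and $(\omega+dd^cu^\epsilon)^m\wedge\omega^{n-m}(x'_\epsilon)\ge F_\epsilon(x'_\epsilon,u^\epsilon(x'_\epsilon))\,\omega^n$; since $w_\epsilon\ge G_\epsilon$ with equality at $x'_\epsilon$, the function $G_\epsilon+u^\epsilon$, which is $\Cc^2$ at $x'_\epsilon$, touches $v_\epsilon$ from below there and $\omega+dd^c(G_\epsilon+u^\epsilon)\in\Gt_m$, so the supersolution inequality for $v_\epsilon$ gives $(\omega+dd^c(G_\epsilon+u^\epsilon))^m\wedge\omega^{n-m}(x'_\epsilon)\le F^\epsilon(x'_\epsilon,v_\epsilon(x'_\epsilon))\,\omega^n$. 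The superadditivity of $S_m^{1/m}$ on $\Gamma_m$ (G{\aa}rding), applied to $\omega+dd^cu^\epsilon$ and $dd^cG_\epsilon$, then forces
\[
F^\epsilon\big(x'_\epsilon,v_\epsilon(x'_\epsilon)\big)\ \ge\ F_\epsilon\big(x'_\epsilon,u^\epsilon(x'_\epsilon)\big)+\delta_1 .
\]
On the other hand $w_\epsilon(x'_\epsilon)\le -a/4$ means $u^\epsilon(x'_\epsilon)\ge v_\epsilon(x'_\epsilon)+a/4$, so by the strict monotonicity of $F$, uniform on the compact range of $u^\epsilon$ and $v_\epsilon$, there is $\sigma>0$ independent of $\epsilon$ with $F(x'_\epsilon,u^\epsilon(x'_\epsilon))\ge F(x'_\epsilon,v_\epsilon(x'_\epsilon))+\sigma$. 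Inserting this together with $F_\epsilon\to F$, $F^\epsilon\to F$ into the displayed inequality and passing to a subsequence along which $x'_\epsilon$, $u^\epsilon(x'_\epsilon)$, $v_\epsilon(x'_\epsilon)$ converge, we obtain $0\ge\sigma+\delta_1>0$, a contradiction; hence $u\le v$ on $X$.

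\noindent\textbf{Main obstacle.} The substantial point, compressed above into ``apply the ABP estimate on $B(0,1)$'', is to carry the convex-envelope argument of Theorem~\ref{thm: viscosity comparison principle} over to a coordinate chart on a manifold without boundary: one must secure the $\epsilon$-uniform lower bound $\delta>0$ for $\det_\R D^2G_\epsilon$ at an \emph{interior} contact point. In the local case this was anchored by $u\le v$ on $\pO$; here it has to be produced from the global minimality of $w_\epsilon$ on $X$ (so that $|m_\epsilon|\ge a-o(1)$ while $w_\epsilon\ge m_\epsilon$ throughout $B(0,1)$), together with the strict monotonicity of $F$, while keeping every form occurring in the G{\aa}rding step $\Gt_m$-positive. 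This interplay — and the care needed when $w_\epsilon$ is nearly constant near its minimum — is where the bulk of the work lies.
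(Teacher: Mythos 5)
Your proof takes a genuinely different route from the paper's, and that route has a real gap at exactly the point you flag as the ``main obstacle.'' The paper's argument is much more economical: since $X$ is compact without boundary, it simply takes $x_\epsilon$ to be a \emph{global} maximum point of $u^\epsilon-v_\epsilon$, and at a contact point where both are punctually second order differentiable (using Jensen's lemma to produce nearby such points when $x_\epsilon$ itself is bad), the classical maximum principle gives $dd^cu^\epsilon\le dd^cv_\epsilon$ directly. The monotonicity of $\St_m$ on $\Gt_m$ then yields $(\omega+dd^cu^\epsilon)^m\wedge\omega^{n-m}\le(\omega+dd^cv_\epsilon)^m\wedge\omega^{n-m}$, hence $F_\epsilon(x_\epsilon,u^\epsilon(x_\epsilon))\le F^\epsilon(x_\epsilon,v_\epsilon(x_\epsilon))$. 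Passing to a subsequential limit and invoking strict monotonicity of $F$ in $t$ gives $\liminf u^{\epsilon_j}(x_{\epsilon_j})\le\limsup v_{\epsilon_j}(x_{\epsilon_j})$ and hence $\sup_X(u-v)\le 0$. No ABP estimate, no convex envelope, no quantitative lower bound on $\det D^2 G_\epsilon$ is needed.

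Your ABP step, by contrast, does not go through. The Alexandroff--Bakelman--Pucci estimate produces a contact point with $\det_\R D^2G_\epsilon\ge\delta>0$ only when the interior minimum of $w_\epsilon$ dips \emph{strictly below} the values of $w_\epsilon$ on the boundary of the ball; in the local proof this was supplied by $u\le v$ on $\pO$, which forced $w_\epsilon\ge 0$ on $\partial U$. On a chart around the global minimum $x_*$ of $v-u$ you have no such information: since $x_\epsilon$ is a \emph{global} minimum of $w_\epsilon$ on $X$, you only know $w_\epsilon\ge w_\epsilon(x_\epsilon)$ on $\partial B(0,1)$, with no gap. In the degenerate case where $u-v\equiv a$ is constant, $w_\epsilon$ is essentially constant, its convex envelope is affine, $\det D^2G_\epsilon\equiv 0$, and the entire scheme collapses --- yet this is precisely a case that the theorem must handle (the paper's argument disposes of it immediately, since $dd^cu^\epsilon\le dd^cv_\epsilon$ at the contact point then forces $F_\epsilon(x_\epsilon,u^\epsilon)\le F^\epsilon(x_\epsilon,v_\epsilon)$ and strict monotonicity yields the contradiction). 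Adding a small quadratic penalization $\eta|x-x_\epsilon|^2$ would restore a boundary gap of size $O(\eta)$, but then $\delta$ would shrink with $\eta$ and you would lose the $\epsilon$-uniform lower bound $\delta_1$ that your conclusion relies on. In short, the convex-envelope/ABP machinery is the wrong tool here; the maximum principle at a global contact point, regularized via Jensen's lemma, is both sufficient and necessary-free of this obstruction, and is what the paper does.
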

\begin{proof}
We consider the sup-convolution and inf-convolution of $u,v$ as in (\ref{eq: sup-convolution}) and (\ref{eq: inf-convolution}).
These functions read in local coordinates as semiconvex and semiconcave functions which are punctually second order differentiable almost everywhere. For each $\epsilon>0$ let $x_\epsilon$ be a maximum point of $u^\epsilon-v_\epsilon$ on $X.$ 

We first treat the case when  $u^{\epsilon}, v_\epsilon$ are punctually second order differentiable at $x_\epsilon.$ In this case, by the classical maximum principle we have
$$
dd^c u^{\epsilon}\leq dd^c v_\epsilon \ \text{at}\ x_\epsilon.
$$
The form  $(\omega+ dd^c u^{\epsilon})$ is $(\omega,m)$-positive at $x_{\epsilon}$. Thus,  
$$
(\omega+dd^cu^{\epsilon})^m\wedge \omega^{n-m}\leq (\omega+dd^cv_{\epsilon})^m\wedge \omega^{n-m} \ \text{at}\ x_\epsilon,
$$
and hence Lemma  \ref{lem: comparison} yields 
\begin{equation}\label{eq: comparison 3}
F_{\epsilon} (x_\epsilon, u^{\epsilon})\leq F^{\epsilon} (x_\epsilon, v_{\epsilon}).
\end{equation}
We can assume that $x_\epsilon\to x_0\in X.$ By extracting a subsequence (twice), there exists a sequence $\epsilon_j\downarrow 0$ such that 
$$
 F_{\epsilon_j} (x_{\epsilon_j}, u^{\epsilon_j}(x_{\epsilon_j}))\ \text{and}\  F^{\epsilon_j} (x_{\epsilon_j}, v_{\epsilon_j}(x_{\epsilon_j}))
$$
converge when $j\to+\infty.$ We thus deduce from (\ref{eq: comparison 3}) that   
$$
F(x_0,\liminf_{j} u^{\epsilon_j}(x_{\epsilon_j}))\leq F(x_0,\limsup_{j} v_{\epsilon_j}(x_{\epsilon_j})).
$$
Since $F$ is increasing in the second variable the latter implies that
\begin{equation}\label{eq: comparison 4}
\liminf u^{\epsilon_j}(x_{\epsilon_j})\leq \limsup v_{\epsilon_j}(x_{\epsilon_j}).
\end{equation}
Since $u^{\epsilon}\downarrow u$ and $v_{\epsilon}\uparrow v$ we have
$$
\sup_X (u-v)\leq \sup_X (u^{\epsilon_j}-v_{\epsilon_j})=u^{\epsilon_j}(x_{\epsilon_j})-v_{\epsilon_j}(x_{\epsilon_j}).
$$
Then (\ref{eq: comparison 4}) implies that $\sup_X(u-v)\leq 0.$
\medskip

Now, if $u^{\epsilon}, v_\epsilon$ are not punctually second order differentiable at $x_\epsilon$ for fixed $\epsilon,$ we proceed as in \cite{EGZ11} to prove that (\ref{eq: comparison 3}) still holds. Consider a local holomorphic chart centered at $x_\epsilon.$ For simplicity we identify a point near $x_{\epsilon}$ with its image in $\C^n.$ For each $k\in \N^*$, the semiconvex function $u^{\epsilon}-v_{\epsilon}-\frac{1}{2k}\Vert x-x_{\epsilon}\Vert^2$ attains its strict maximum at $x_{\epsilon}.$  By Jensen's lemma (\cite{Jen88}; see also \cite[Lemma A.3, page 60]{CIL92}), there exist  sequences $(p_k), (y_k)$ converging to $0$ and $x_{\epsilon}$ respectively such that the functions $u^{\epsilon}, v_\epsilon$ are punctually second order differentiable at $y_k$ and the function 
$$
u^{\epsilon}-v_{\epsilon}-\frac{1}{2k}\Vert x-x_{\epsilon}\Vert^2-\langle p_k,x\rangle
$$
attains its local maximum at $y_k.$  We thus get 
$$
dd^c u^{\epsilon}\leq dd^c v_\epsilon+O(1/k)\omega \ \text{at}\ y_k.
$$
Since $v_\epsilon$ is semi-concave, and $u^{\epsilon}$ is $(\omega,m)$-subharmonic we get
$$
(\omega+dd^c u^{\epsilon})^m\wedge \omega^{n-m}\leq (\omega+ dd^c v_\epsilon)^m\wedge\omega^{n-m}+O(1/k)\omega^n \ \text{at}\ y_k.
$$
This together with (\ref{eq: comparison 1}) and (\ref{eq: comparison 2}) yield
$$
F_{\epsilon} (y_k, u^{\epsilon}(y_k))\leq F^{\epsilon} (y_k, v_{\epsilon}(y_k))+O(1/k).
$$
Now, let $k\to +\infty$ we obtain (\ref{eq: comparison 3}) which completes the proof.
\end{proof} 

%%%%%%%%%%%%%%%%%%%%%%%%%%%%%%%%%%%%%%%%%%%%%

\section{Proof of the main results}
\subsection{Proof of Theorem A}
Let $\Fc$ denote the family of all subsolutions $w$ of (\ref{eq: heq 1})  such that $u\leq w\leq v.$ It is not empty thanks to the local comparison principle.  We set 
$$
\f:=\sup\{w: w\in \Fc\}.
$$ 
By Choquet's lemma $\f^*=(\limsup w_j)^*$ where $w_j$ is a sequence in $\Fc.$ It follows from Lemma \ref{lem: decreasing sequence of viscosity subsolution} that $\f^*$ is a subsolution of (\ref{eq: heq 1}). 

We claim that $\f_*$ is a supersolution of (\ref{eq: heq 1}). Indeed, assume that $\f_*$ is not a supersolution of (\ref{eq: heq 1}). Then there exist $x_0\in \Omega$ and $q\in \Cc^2(\{x_0\})$ such that $q$ touches $\f_*$ from below at $x_0$ but 
$$
H_m(q)(x_0)>F(x_0,q(x_0))\beta^n.
$$
By the continuity of $F$, we can find  $r>0$ small enough  such that  $q\leq \f_*$ in $B(x_0,r)$ and
$$
H_m(q)(x)>F(x,q(x))\beta^n,\ \ \forall x\in B=B(x_0,r).
$$
We then choose $0<\epsilon$ small enough and $0<\delta<<\epsilon$ so that the function 
$Q=Q_{\epsilon,\delta}:=q+\delta -\epsilon \vert x-x_0\vert^2$  satisfies
$$
H_m(Q)(x)>F(x,Q(x))\beta^n, \ \forall x\in B.
$$
Define $\phi$ to be $\f$ outside $B$ and $\phi=\max(\f,Q)$ in $B.$ Since $Q<\f$ near $\partial B$, we see that $\phi$ is upper semi continuous and it is a subsolution of (\ref{eq: heq 1}) in $\Omega.$ 

Let $(x_j)$ be a sequence in $B$ converging to $x_0$ such that $\f(x_j)\to \f_*(x_0).$ Then $Q(x_j)-\f(x_j)\to Q(x_0)-\f_*(x_0)=\delta>0.$ Thus $\phi\not \equiv \f,$ which contradicts the maximality of $\f.$
\medskip

From the above steps we know that $\f_*$ is a supersolution and $\f^*$ is a subsolution. We also have $g=u_*\leq \f_*\leq \f^*\leq v^*=g$ on $\pO.$ Thus by the viscosity comparison principle $\f=\f_*=\f^*$ is a continuous viscosity solution of (\ref{eq: hessian}) with boundary value $g.$

It remains to prove that $\f$ is also a potential solution of (\ref{eq: hessian}).
From Theorem \ref{thm: viscosity vs potential general case} we know that 
$$
H_m(\f)\geq F(x,\f)\beta^n
$$
in the potential sense. Let  $B=B(x_0,r)\subset \Omega$ is a small ball in $\Omega.$  Thanks to Dinew and Kolodziej \cite[Theorem 2.10]{DK11} we can solve the Dirichlet problem to find $\p\in \Pm(B)\cap \Cc(\bar{B})$ with boundary value $\f$ such that 
$$
H_m(\p)=F(x,\f)\beta^n,\  \text{in}\ B.
$$ 
By the potential comparison principle we have $\f\leq \p$ in $\bar{B}$. Define $\tilde{\p}$ to be $\p$ in $B$ and $\f$ in  $\Omega\setminus B$. Set $G(x)=F(x,\f(x)), x\in \Omega.$ It is easy to see that $\tilde{\p}$ is a viscosity solution of 
$$
-(dd^c u)^m\wedge\beta^{n-m}+G\beta^n=0.
$$
By the viscosity comparison principle we deduce that $\tilde{\p}\leq \f$ in $\Omega$ which implies that $\f=\p$ in $B.$ The proof is thus complete.

\subsection{Proof of Theorem B}
Let $\f$ be the unique viscosity solution obtained from Theorem A. Since $u, v$ are $\gamma$-H\"{o}lder continuous in $\bar{\Omega}$ and $F$ satisfies (\ref{eq: V3}), we can find a constant $C>0$ such that 
$$
\sup_{x,y\in \bar{\Omega}}\Big(\vert u(x)-u(y)\vert+ \vert v(x)-v(y)\vert\Big)\leq C\vert x-y\vert^{\gamma},
$$
and 
$$
 \sup_{\vert t\vert \leq M}\sup_{x,y\in \bar{\Omega}}\vert F^{1/m}(x,t)-F^{1/m}(y,t)\vert\leq C\vert x-y\vert^{\gamma},
$$
where $M>0$ is such that  $\vert \f\vert \leq M,$ on $\bar{\Omega}.$ 

Fix $R>0$ such that $\Omega\subset B(0,R).$ Define  $\p:\bar{\Omega}\rightarrow \R$ by
$$
\p(x):=\sup_{y\in \bar{\Omega}}\Big\{\f(y)+C\vert x-y\vert^{\gamma}(\vert x\vert^2-R^2-1)\Big\}.
$$

\noindent {\bf Step 1: Prove that $\p$ is $\gamma$-H\"{o}lder continuous.}
Fix $x_1,x_2\in \bar{\Omega}$, and $y_1,y_2$ corresponding maximum points in $\bar{\Omega}$ as in the definition of $\p.$ We obtain
\begin{multline*}
\p(x_1)-\p(x_2)\geq  C\vert x_1-y_2\vert^{\gamma}(\vert x_1\vert^2-R^2-1) -C\vert x_2-y_2\vert^{\gamma}(\vert x_2\vert^2-R^2-1)\\
=C(\vert x_1\vert^2-R^2-1)(\vert x_1-y_2\vert^{\gamma}-\vert x_2-y_2\vert^{\gamma}) +C\vert x_2-y_2\vert^{\gamma}(\vert x_1\vert^2-\vert x_2\vert^2)\\
\geq C(\vert x_1\vert^2-R^2-1)\vert x_1-x_2\vert^{\gamma} +C\vert x_2-y_2\vert^{\gamma}(\vert x_1\vert^2-\vert x_2\vert^2)\geq -C'\vert x_1-x_2\vert^{\gamma},
\end{multline*}
where $C'>0$ depends only on $C, R.$ Similarly, we have
$$
\p(x_1)-\p(x_2)\leq C'\vert x_1-x_2\vert^{\gamma}.
$$
The above inequalities show  that $\p$ is $\gamma$-H\"{o}lder continuous in $\bar{\Omega}.$
\medskip

\noindent{\bf Step 2: Prove that $\p$ is a subsolution of (\ref{eq: heq 1}).} Let $x_0\in \Omega$ and $q\in \Cc^2(\{x_0\})$ which touches $\p$ from above at $x_0.$  Let $y_0\in \bar{\Omega}$ be such that
$$
\p(x_0)=\f(y_0)+C\vert x_0-y_0\vert^{\gamma}(\vert x_0\vert^2-R^2-1).
$$
If   $y_0\in \pO$ then $\f(y_0)=u(y_0),$ hence  
\begin{eqnarray*}
0&\geq& C\vert x_0-y_0\vert^{\gamma}(\vert x_0\vert^2-R^2)=\p(x_0)-\f(y_0)+C\vert x_0-y_0\vert^{\gamma}\\
&\geq & u(x_0)-u(y_0)+C\vert x_0-y_0\vert^{\gamma}\geq 0.
\end{eqnarray*}
We thus get $\f(x_0)=\p(x_0)$ and the result follows since $\f$ is a subsolution. Let us treat the case $y_0\in \Omega.$ 
The function $Q$, defined around $y_0$ by 
$$
Q(x):=q(x+x_0-y_0)-C\vert x_0-y_0\vert^{\gamma}\Big(\vert x+x_0-y_0\vert^2-R^2-1\Big),
$$
touches $\f$ from above at $y_0.$ Since $\f$ is a subsolution of (\ref{eq: heq 1}), we have
$$
\St_m^{1/m}\Big(dd^c Q(y_0)\Big)\geq F^{1/m}(y_0,Q(y_0)).
$$
By the concavity of $\St_m^{1/m}$ we get
\begin{eqnarray*}
\St_m^{1/m}\Big(dd^c q(x_0)\Big)&\geq& F^{1/m}(y_0,Q(y_0))+C\vert x_0-y_0\vert^{\gamma}\\
&=&F^{1/m}(y_0,\f(x_0))+C\vert x_0-y_0\vert^{\gamma}\\
&\geq& F^{1/m}(x_0,\f(x_0)),
\end{eqnarray*}
which implies that $\p$ is a subsolution of (\ref{eq: heq 1}). 

It is easy to see that $\f\leq \p$ and for any $x\in \pO, y\in \bar{\Omega}$, we have
$$
\f(y)-C\vert x-y\vert^{\gamma}\leq v(y)-C\vert x-y\vert^{\gamma}\leq v(x)=g(x).
$$
This implies $\p=g$ on $\pO.$ Hence,  since $\f$ is maximal we obtain $\f=\p$ which, in turn, shows that $\f$ is $\gamma$-H\"{o}lder continuous.

%%%%%%%%%%%%%%%%%%

\subsection{Proof of Corollary C}
Let $h$ be the harmonic function with boundary value $g;$ it is a continuous supersolution of (\ref{eq: heq 1}).  It follows from \cite{BT76} that there exists a continuous psh function $u$ with boundary value $g.$  Then for $A>>1$, the function $u+A\rho$, where $\rho$ is a defining function of $\Omega,$ is a subsolution. Thus, by Theorem A there exists a continuous viscosity solution. 

Now, assume that  $g$ is $(2\gamma)$-H\"{o}lder continuous in $\bar{\Omega}.$ Then we can choose $u$ to be  $\gamma$-H\"{o}lder continuous in $\bar{\Omega}$ thanks to \cite{BT76}. The same thing holds for $h.$  It suffices to apply Theorem B. The proof is thus complete.
\medskip

\begin{rmk}
In Corollary C it is natural to consider a strongly $m$-pseudoconvex domain (i.e. the defining function is strongly $m$-subharmonic). The existence of continuous subsolution and supersolution is obvious which yields the existence of viscosity solution. However, the  H\"{o}lder continuity is delicate. 
\end{rmk}
\subsection{Proof of Theorem D}
It follows from (\ref{eq: V4}) that $u\equiv t_0$ is a subsolution and $v\equiv t_1$ is a supersolution of (\ref{eq: hes hom}). The global comparison principle  (Theorem \ref{thm: viscosity comparison principle hom})  allows us to repeat  the proof of Theorem A to prove Theorem D.

\medskip

In the following, we give an example of compact Hermitian homogeneous manifold satisfying our conditions (H1), (H2), (H3) which is not K\"{a}hler. It  is communicated to us by Karl Oeljeklaus to whom we are indebted.
\begin{examp}\label{example: Karl}
Consider $G= SL(3,\C)$, $K=SU(3)$ and
$$
H=\left\{ \left( \begin{array}{ccc}e^{w} & z_1 & z_2 \\0 & e^{iw} & z_3 \\ 0& 0 &e^{-w-iw} \end{array} \right)\ \Big /\ w, z_1, z_2, z_3\in \C\right\}.
$$
Then $G$ is a connected complex Lie group and $H$ is a closed complex subgroup. The manifold $X=G/H$ is Hermitian. 
It is clear that $K$ acts freely and transitively on $X.$ Taking any Hermitian metric and averaging it over the Haar measure of  $K$ we obtain a Hermitian metric $\omega$ verifying (H3).  
Now we prove that $X$ is not  K\"{a}hler.  Since $K$ acts freely on $X$ we see that $X$ is simply connected. 

Consider 
$$
I=\left\{ \left( \begin{array}{ccc} \lambda_1 & z_1 & z_2 \\0 & \lambda_2 & z_3 \\ 0& 0 &(\lambda_1.\lambda_2)^{-1} \end{array} \right)\ \Big /\  z_1, z_2, z_3\in \C; \lambda_1, \lambda_2 \in \C^* \right\}.
$$
Then $H$ is a closed subgroup of $I$ and $Y=G/I$ is a rational-projective manifold.  If $X$ admits a K\"{a}hler metric then it follows from \cite{BR62} (see also \cite{BN90})  that  the Tits fibration 
$$
\pi: G/H\rightarrow G/I
$$
is holomorphically trivial and its fiber $I/H$ is a complex compact torus. This implies that $\pi_1(X)$ is non-trivial which is impossible since $X$ is simply connected. Thus  $X$  does not admit any  K\"{a}hler metric.
\end{examp}

\bigskip

\noindent {\bf Acknowledgement.} I would like to express my deep gratitude to Ahmed Zeriahi and Vincent Guedj for inspirational discussions and constant encouragements.  
I also would like to thank  Philippe Eyssidieux and 
 Dan Popovici for valuable discussions.  I am very grateful to Nguyen Van Dong for finding an important error in the previous proof of Lemma \ref{lem: semi convex}. I am indebted to   Karl Oeljeklaus for communicating an 
 important example and references for it and also for very useful discussions.
 Finally, I would like to thank the referee for useful comments and corrections which improve the presentation of this paper.

 \bigskip

\noindent LU Hoang Chinh\\
Universit\'e Paul Sabatier\\
Institut de Math\'ematiques de Toulouse\\
118 Route de Narbonne \\
31062 Toulouse\\
lu@math.univ-toulouse.fr. 
\end{document}